\newtheorem{theorem}{Theorem}[section]
\newtheorem{lemma}[theorem]{Lemma}
\newtheorem{proposition}[theorem]{Proposition}
\newtheorem{definition}[theorem]{Definition}
\theoremstyle{definition}
\newtheorem{remark}[theorem]{Remark}
\DeclareMathOperator{\Aut}{Aut}
\DeclareMathOperator{\id}{id}
\newcommand{\PG}{\mathrm{PG}}
\newcommand{\K}{\mathbb{K}}
\title[Small light dual multinets]{Light dual multinets of order six in the projective plane}
\author{Norbert Bogya}
\address{Bolyai Institute \\
University of Szeged \\
Aradi v\'ertan\'uk tere 1\\
H-6720 Szeged, Hungary}
\email{nbogya@math.u-szeged.hu}
\author{G\'abor P. Nagy}
\address{Department of Algebra \\
Budapest University of Technology and Economics\\
Egry J\'ozsef utca 1\\
H-1111 Budapest, Hungary}
\address{Bolyai Institute \\
University of Szeged \\
Aradi v\'ertan\'uk tere 1\\
H-6720 Szeged, Hungary}
\email{nagyg@math.bme.hu}
\thanks{Research supported by NKFIH-OTKA Grants 114614 and 119687.}
\date{\today}
\begin{document}

\begin{abstract}
The aim of this paper is twofold: First we classify all abstract light dual multinets of order $6$ which have a unique line of length at least two. Then we classify the weak projective embeddings of these objects in projective planes over fields of characteristic zero. For the latter we present a computational algebraic method for the study of weak projective embeddings of finite point-line incidence structures.
\end{abstract}

\maketitle

\section{Introduction}

In recent years, nets realizing a finite group have been investigated in connection with complex line arrangements and resonance theory; see \cite{fy2007,knp_3,knp_k,miq,per,ys2004,ys2007}. The concept of a multinet was introduced by Falk and Yuzvinsky \cite{fy2007} as multi-arrangements of lines in the complex projective plane, with a partition into three or more equinumerous classes which have equal multiplicities at each inter-class intersection point, and satisfy a connectivity condition. Korchmáros and Nagy \cite{KN_multinets} gave a more formal definition for a dual multinet of the projective plane, labeled by a quasigroup $Q$. Let $\K$ be a field, $Q$ a quasigroup and for $i=1,2,3$, let $\alpha_i:Q\to \PG(2,\K)$ be maps such that the points $\alpha_1(x)$, $\alpha_2(y)$ and $\alpha_3(x\cdot y)$ are collinear for all $x,y\in Q$. Define the \textit{multisets} $\Lambda_i=\alpha_i(Q)$, $i=1,2,3$. Then $(\Lambda_1,\Lambda_2,\Lambda_3)$ is a \textit{dual multinet,} labeled by $Q$. If the maps $\alpha_i$ are injective and their images $\Lambda_i$ are disjoint, then the dual multinet is called \textit{light.} This terminology is in accordance with the one introduced by Bartz and Yuzvinsky \cite{Bartz2013,BartzYuz2014}. 

Let $(\Lambda_1,\Lambda_2,\Lambda_3)$ be a light dual multinet in $\PG(2,\K)$ labeled by the quasigroup $Q$. As we show later in the abstract setting, if the line $\ell$ intersects two components $\Lambda_i,\Lambda_j$ then there is a integer $r$ such that $r=|\ell\cap \Lambda_1|=|\ell\cap \Lambda_2|=|\ell\cap \Lambda_3|$; this integer $r$ is called the \textit{length} of $\ell$ w.r.t. $(\Lambda_1,\Lambda_2,\Lambda_3)$. Examples of group-labeled light dual multinets were given by Bartz and Yuzvinsky \cite{Bartz2013,BartzYuz2014} and by Korchm\'aros and Nagy \cite{KN_multinets}. In \cite{BartzYuz2014}, the examples of light dual multinets are constructed from a three dimensional multinet. They are labeled by the dihedral group of order $n$ and the long lines have order $2$ or $n/2$. In \cite{KN_multinets} the authors define triangular light dual multinets of order $n$ which are contained in the union of three lines of length $n/3$, and tetrahedron and conic-line type light dual multinets, which have a unique line of length $n/2$. 

The motivation of the present paper is the following observation. The known constructions of light dual multinets and the results of \cite{KN_multinets} suggest that the length $r>1$ of lines of the light dual multinet makes a big difference in their geometric structure. While for $r\geq 9$, the light dual multinet is well structured in geometric and algebraic sense, the case of small $r$, especially $r=2$ shows many irregularities. In this paper, we classify all abstract light dual multinets of order $6$ with a unique line of length $r>1$. Moreover, we compute all possible realizations of these abstract light dual multinets in projective planes over fields of characteristic $0$. For this purpose, we present an algebraic framework to handle the occurring system of polynomial equations. The computation was done using computer algebra systems GAP \cite{GAP4}, Singular \cite{Singular} and SageMath \cite{Sage8.2}. 

The paper is organized as follows. In section 2, we give the basic definitions and terminology and present some simple results which are known and more or less folklore. In Section 3, we introduce the concept of an abstract light dual multinet $\Sigma$, labeled by a quasigroup $Q$ and explain the relation between long lines of $\Sigma$ and subsquares of $Q$. In Section 4, we classify all abstract light dual multinets of order $6$ having a unique line of length $r>1$. Section 5 presents the algebraic machinery for the study of weak embeddings of point-line incidence structures in the projective plane $\PG(2,\K)$, where $\K$ is a field. In Sections 6 and 7 we apply this machinery to classify the weak projective embeddings of abstract light dual multinets of order $6$. In the appendices we give the SageMath \cite{Sage8.2} codes which compute the results.  

%The main result of \cite{KN_multinets} describes group-labeled light dual multinets with a line of length at least $9$.
%
%\begin{theorem}\label{th:main}
%Let $\PG(2,\K)$ be the projective plane coordinatized by an algebraically closed field $\K$ of characteristic $p\geq 0$. Let $G$ be a group of order $n$ and assume $p=0$ or $p>n$. Let $\Lambda=(\Lambda_1,\Lambda_2,\Lambda_3)$ be a light dual multinet in $\PG(2,\K)$, labeled by $G$. If $\Lambda$ has a line of length at least $9$, then exactly one of the following cases occurs:
%\begin{enumerate}[(I)]
%\item $\Lambda$ is contained in a line.
%\item $\Lambda$ has a line of length $3$.
%\item $\Lambda$ is either triangular, or of tetrahedron type, or of conic-line type.
%\end{enumerate}
%\end{theorem}

\section{Preliminaries}

\subsection{Quasigroups, isotopes, parastrophes}
A \textit{quasigroup} $(Q,\cdot)$ is a set endowed with a binary operation $x\cdot y$ such that the equation $x\cdot y=z$ can be uniquely solved if any two of the three values $x,y,z\in Q$ are given. One denotes the solutions with left and right division: $y=x\setminus z$, $x=z/y$. The maps $L_a:x\mapsto a\cdot x$, $R_a:x\mapsto x\cdot a$ are the \textit{left} and \textit{right multiplication maps} of $Q$. 

Let $(Q,\cdot)$ and $(R,\circ)$ be quasigroups. An \textit{isostrophism} from $Q$ to $R$ is a quadruple $(\sigma,\gamma_1,\gamma_2,\gamma_3)$, where $\gamma_1,\gamma_2,\gamma_3$ are bijective maps from $Q$ to $R$ and $\sigma \in S_3$ such that for all elements $x_1,x_2,x_3 \in Q$ the following holds: 
\[x_{\sigma(1)}\cdot x_{\sigma(2)}=x_{\sigma(3)} \Longleftrightarrow \gamma_1(x_1)\circ\gamma_2(x_2) = \gamma_3(x_2).\]
It is straightforward to see that being isostrophic is an equivalence
relation. If $\sigma$ is the identity then we speak of the \textit{isotopism} $(\gamma_1,\gamma_2,\gamma_3)$; for all $x,y \in Q$ 
\[\gamma_1(x)\circ\gamma_2(y)=\gamma_3(x\cdot y).\]
If the underlying sets $Q, R$ are equal and $\gamma_1,\gamma_2,\gamma_3$ are the identical maps than we say that the quasigroups $(Q,\cdot)$ and $(Q,\circ)$ are \textit{$\sigma$-conjugate}. We speak of a \textit{principal isotopism} if the underlying sets $Q,R$ are equal, $\sigma=\id$, $\alpha_1=R_v$, $\alpha_2=L_u$ and $\alpha_3=\id$ for some fixed elements $u,v\in Q$. Then for all $x,y\in Q$
\[x\circ y= x/u \cdot v\backslash y.\]
In this case, $(Q,\circ)$ is a loop with unit element $v\cdot u$. 

We remark that some authors use the term \textit{parastrophe} for isostrophe. In the language of Latin squares, one says that isostrophic quasigroups $Q$ and $R$ belong to the same \textit{main class.} 

\subsection{Subsquares in quasigroups}
While subquasigroups are defined in the obvious way, quasigroups have another important substructure which we call \textit{subsquares.} In \cite{DenesKeedwell},  subsquares of order $2$ are called \textit{intercalates. }

\begin{definition}
Let $(Q,\cdot)$ be a quasigroup and $S_1,S_2,S_3\subseteq Q$ such that
\[S_1\cdot S_2\subseteq S_3, \quad S_1\setminus S_3\subseteq S_2, \quad S_3/S_2\subseteq S_1.\] 
Then we say that the triple $(S_1,S_2,S_3)$ is a \textit{subsquare} of $Q$. For a subsquare one has $|S_1|=|S_2|=|S_3|$; this cardinality is the \textit{order} of the subsquare. Subsquares of order $1$, and the subsquare $(Q,Q,Q)$ are called trivial. Nontrivial subsquares are proper. 
\end{definition}

Clearly, subsquares of $Q$ form a poset and the intersection of subsquares is a subsquare, as well. Moreover, for $U_1,U_2,U_3\subseteq Q$, the subsquare generated by $(U_1,U_2,U_3)$ is the smallest subsquare $(S_1,S_2,S_3)$ such that $U_i\subseteq S_i$, $i=1,2,3$. For any fixed subsquare $(S_1,S_2,S_3)$ of $Q$, there is a principal loop isotope $(Q,\oplus)$ of $(Q,\cdot)$ in which $S_3$ is a subloop. Indeed, take arbitrary elements $u\in S_1$, $v\in S_2$ and put
\[x\oplus y=x/v \cdot u\backslash y.\]
Then $(Q,\oplus)$ is a loop with unit element $u\cdot v$, and
\[S_3\oplus S_3 = S_3/v \cdot u\backslash S_3 = S_1 \cdot S_2 =S_3.\]

For a general loop of order $n$, the order of a subloop does not divide $n$. However, proper subloops have order at most $n/2$ and subloops of order $n/2$ are normal.

\subsection{Point-line incidence structures}

We quote the basic definitions on (point-line) incidence structures from \cite[Chapter I]{DesignTheory}. A \textit{point-line incidence structure} is a triple $(\mathcal{P},\mathcal{B},I)$, where $\mathcal{P},\mathcal{B}$ are sets and $I\subseteq \mathcal{P} \times \mathcal{B}$. The elements of $\mathcal{P}$ are called \textit{points}, the elements of $\mathcal{B}$ are \textit{blocks} or \textit{lines.} Instead of $(p, B) \in I$ we will simply write $pIB$ and use such geometric language as ``the point $p$ lies on the block $B$'', ``$B$ passes through $p$'', ``$p$ and $B$ are incident'', etc. The \textit{trace} of the block $B$ is the set $T(B)$ of points, incident with $B$. An incidence structure is called \textit{simple,} if $T(B)=T(C)$ implies $B=C$ for all blocks $B,C$. For a simple incidence structure, we can identify each block $B$ with the corresponding point set $T(B)$ and the incidence relation $I$ with the membership relation $\in$. In this case, we write $(\mathcal{P},\mathcal{B})$ for $(\mathcal{P},\mathcal{B},\in)$. 

Let $(\mathcal{P},\mathcal{B},I)$ be an incidence structure. We say that the points $p_1,p_2,\ldots,p_k\in \mathcal{P}$ are \textit{collinear} if there is a block $B\in\mathcal{B}$ such that $p_i I B$ for all $i=1,\ldots,k$. By a \textit{collinear triple} we mean three different collinear points. It is easy to see that in a simple incidence structure, all blocks of size at least three can be reconstructed from the set of collinear triples. 

An isomorphism between the incidence structures $(\mathcal{P}_1,\mathcal{B}_1,I_1)$ and $(\mathcal{P}_2,\mathcal{B}_2,I_2)$ is a bijective map $\varphi: \mathcal{P}_1\cup \mathcal{B}_1 \to \mathcal{P}_2\cup \mathcal{B}_2$ preserving points, lines and the incidence relation. If both incidence structures are simple and blocks are identified with subsets of points, then an isomorphism is a bijective map $\mathcal{P}_1 \to \mathcal{P}_2$, which induces a bijection from $\mathcal{B}_1$ to $\mathcal{B}_2$. If the incidence structures are simple and all blocks have size at least $3$, then an isomorphism is a bijective map $\mathcal{P}_1 \to \mathcal{P}_2$, which induces a bijection on the sets of collinear triples. Finally, we mention that the \textit{dual} of the incidence structure $(\mathcal{P},\mathcal{B},I)$ is the incidence structure $(\mathcal{B},\mathcal{P},I')$, where $pIB$ holds if and only if $BI'p$.

\subsection{Abstract dual 3-nets, well-indexing}
An abstract dual $3$-net is a simple point-line incidence structure $\Sigma=(\mathcal{P},\mathcal{L})$, where $\mathcal{P}$ is the disjoint union of the three subsets $\mathcal{P}_1$, $\mathcal{P}_2$ and $\mathcal{P}_3$ and $\mathcal{L}$ consists of subsets of $\mathcal{P}$ of size $3$ such that for any $p_i\in \mathcal{P}_i$, $p_j\in \mathcal{P}_j$ ($1\leq i<j\leq 3$) there is exactly one element of $\mathcal{L}$, which contains $p_i$ and $p_j$. Given a quasigroup $(Q,\cdot)$, one constructs an abstract dual $3$-net in the following way. One takes three disjoint sets  $\mathcal{P}_1,\mathcal{P}_2, \mathcal{P}_3$ with cardinality $|Q|$, bijections $\alpha_i:Q\to \mathcal{P}_i$, and defines $\mathcal{P}=\mathcal{P}_1\cup \mathcal{P}_2 \cup \mathcal{P}_3$, 
\[\mathcal{L}=\{\{\alpha_1(x), \alpha_2(y), \alpha_3(x\cdot y)\} \mid x,y\in Q\}.\]
Moreover, any abstract dual $3$-net $\Sigma$ can be obtained in this way. We say that $\Sigma$ is \textit{labeled} by the quasigroup $Q$. 

The labeling quasigroup of the abstract dual $3$-net $\Sigma$ is not uniquely determined. Assume that $\Sigma$ is labeled by $(Q,\cdot)$ w.r.t. the labeling maps $(\alpha_1,\alpha_2,\alpha_3)$. Let $(\sigma,\gamma_1,\gamma_2,\gamma_3)$ be an isostrophism from $Q$ to $(R,\circ)$. Then we define the maps $\hat{\alpha}_i: R\to \mathcal{P}$, $i=1,2,3$, by $\hat{\alpha}_{\sigma(i)} = \alpha_i \gamma_{\sigma(i)}^{-1}$. The triple $(\hat{\alpha_1},\hat{\alpha_2},\hat{\alpha_3})$ turns out to be a labeling of $\Sigma$ by $R$. Conversely, if $\Sigma$ can be labeled by the quasigroups $(Q,\cdot)$ and $(R,\oplus)$ via the labeling maps $\alpha_1,\alpha_2,\alpha_3$ and $\hat{\alpha}_1, \hat{\alpha}_2, \hat{\alpha}_3$, then $Q$ and $R$ are isostrophes. If $Q$ is a group (associative multiplication) and $Q$ and $R$ are isostrophes, then $R$ is a group isomorphic to $Q$. 

We say that the abstract dual $3$-net $\Sigma=(\mathcal{P},\mathcal{L})$ of order $n$ is \textit{well-indexed,} if the following hold:
\begin{enumerate}[(W1)]
\item $\mathcal{P}$ is the disjoint union of $\mathcal{P}_1=\{1,\ldots,n\}$, $\mathcal{P}_2=\{n+1,\ldots,2n\}$, and $\mathcal{P}_3=\{2n+1,\ldots,3n\}$.
\item For each $j\in \{1,\ldots,n\}$, the triples $\{1,n+j,2n+j\}$, $\{j,n+1,2n+j\}$ are in $\mathcal{L}$. 
\end{enumerate}
For any finite $\Sigma$, (W1) can be assumed without loss of generality. Moreover, (W2) can be achieved by rearranging first $\mathcal{P}_3$ and then $\mathcal{P}_1$. Also notice that a well-indexed dual $3$-net has a canonical labeling by a loop $(L,*)$, where the underlying set of $L$ is $\{1,\ldots,n\}$ and the unit element is $1$. Indeed, we take the labeling maps
\[\alpha_1(x)=x, \quad \alpha_2(y)=n+y, \quad \alpha_3(z)=2n+z.\]
\begin{lemma} \label{lm:wellindexing1}
Let $\Sigma=(\mathcal{P},\mathcal{L})$ be an arbitrary finite abstract dual $3$-net, labeled by the quasigroup $(Q,\cdot)$. Then there is an isomorphic well-indexed finite dual $3$-net $\Sigma^*=(\mathcal{P}^*,\mathcal{L}^*)$. 
\end{lemma}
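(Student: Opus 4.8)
The plan is to realize $\Sigma^*$ on the fixed point set $\{1,\dots,3n\}$, where $n=|Q|$, and to obtain the isomorphism $\Sigma\to\Sigma^*$ as a composite of three relabelings of the point classes. Throughout, the basic facts I use are that relabeling the points of an incidence structure yields an isomorphic one, and that by the quasigroup axioms each left and right multiplication map in a quasigroup is a bijection (so the permutations introduced below are legitimate).

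First I would secure (W1). Writing $\alpha_i\colon Q\to\mathcal P_i$ for the labeling maps of $\Sigma$, I choose bijections from $\mathcal P_1,\mathcal P_2,\mathcal P_3$ onto $\{1,\dots,n\}$, $\{n+1,\dots,2n\}$, $\{2n+1,\dots,3n\}$ and identify $Q$ with $\{1,\dots,n\}$, so that after this relabeling $\alpha_1=\id$, $\alpha_2(y)=n+y$, $\alpha_3(z)=2n+z$ and $\mathcal L=\{\{x,\,n+y,\,2n+(x\cdot y)\}\mid x,y\in Q\}$ for a quasigroup $(Q,\cdot)$ on $\{1,\dots,n\}$. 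This step has no content beyond the remark that (W1) can always be assumed.

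Next I would repair the first half of (W2) by rearranging $\mathcal P_3$: relabeling the point $2n+z$ as $2n+\tau(z)$ with $\tau(z)=1\backslash z$ replaces $\cdot$ by $x\cdot'y:=1\backslash(x\cdot y)$, for which $1\cdot'y=y$; hence every triple $\{1,n+j,2n+j\}$ is now a line. Then I would repair the second half by rearranging $\mathcal P_1$: relabeling the point $x\in\mathcal P_1$ as $\rho(x)$, where $\rho(x):=x\cdot'1$ (a permutation, since right multiplication in the quasigroup $(Q,\cdot')$ is a bijection), replaces $\cdot'$ by $u\cdot''y:=\rho^{-1}(u)\cdot'y$. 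A one-line check gives $1\cdot'1=1$, hence $\rho(1)=1$, which shows at once that $1$ is still a left identity of $\cdot''$ (so the triples $\{1,n+j,2n+j\}$ remain lines) and that $1$ is now also a right identity of $\cdot''$ (so the triples $\{j,n+1,2n+j\}$ become lines). Thus the resulting structure $\Sigma^*$ satisfies (W1) and (W2), and the composite of the three relabelings is the required isomorphism $\Sigma\to\Sigma^*$.

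The single point deserving attention — and what I would flag as the crux — is that the $\mathcal P_1$-rearrangement must not destroy what the $\mathcal P_3$-rearrangement achieved: once $1$ is a left identity of $\cdot'$ one has $1\cdot'1=1$, so the permutation $\rho$ that installs the right-identity property fixes $1$ and therefore preserves the left-identity property, hence the first family of lines; everything else is bookkeeping with the defining identities of a quasigroup. Alternatively, one could invoke the fact recalled just before the lemma that a suitable principal isotope of $Q$ is a loop with unit $1$ that still labels $\Sigma$, and then relabel once more into standard position; this amounts to the same computation.
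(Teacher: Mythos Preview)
Your proof is correct and follows essentially the same approach as the paper: both pass from the labeling quasigroup to a loop with unit $1$ (the paper invokes a principal loop isotope in one stroke, while you carry out the two relabelings of $\mathcal P_3$ and then $\mathcal P_1$ explicitly, exactly as the text preceding the lemma suggests), and then read off the well-indexed $\Sigma^*$. Your final paragraph already identifies the paper's shortcut as the natural alternative.
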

\begin{proof}
Let us first relabel $\Sigma$ with a loop isotope $(Q,\circ,e)$ of $(Q,\cdot)$. Write $Q=\{q_1=e,q_2,\ldots,q_n\}$ and define the binary operation $i*j$ by $q_i\circ q_j=q_{i*j}$. Then, $(\{1,\ldots,n\},*)$ is a loop, isomorphic to $(Q,\circ)$ and we can relabel $\Sigma$ 
with $(\{1,\ldots,n\},*)$. Define the abstract dual $3$-net $\Sigma'$ of $(\{1,\ldots,n\},*)$ with labeling maps 
\[\alpha_1(x)=x, \quad \alpha_2(y)=n+y, \quad \alpha_3(z)=2n+z.\]
Clearly, $\Sigma'$ is well-indexed and $\Sigma$ and $\Sigma'$ are isomorphic. 
\end{proof}

\subsection{Projective realizations of finite quasigroups} In the form we are interested in, projective realizations of quasigroups have been introduced by Yuzvisnky \cite{ys2004}. However, the idea goes back to the study of the additive and multiplicative loops of ternary rings and their geometrical interpretation in non Desarguesian projective planes, see \cite{BarlStram,DesignTheory,DenesKeedwell}. Let $\K$ be an algebraically closed field and $Q$ a finite quasigroup of order $n$. We say that the disjoint point sets $\Lambda_1,\Lambda_2,\Lambda_3$ of $\PG(2,\K)$ \textit{realize} $Q$ if there are bijections $\alpha_i:Q\to \Lambda_i$, $i=1,2,3$, such that for $x,y,z\in Q$, the points $\alpha_1(x)$, $\alpha_2(y)$, $\alpha_3(z)$ are collinear if and only if $x\cdot y=z$ holds. It is clear that $(\Lambda_1,\Lambda_2,\Lambda_3)$ can be seen as an abstract dual $3$-net, embedded in $\PG(2,\K)$ and labeled by $Q$. Important examples of quasigroup realizations were given by Yuzvinsky \cite{ys2004} when $Q$ is an abelian group, by Stipins \cite{sj2004} when $Q$ is a nonassociative loop of order $5$, by Korchm\'aros, Nagy and Pace when $Q$ is a finite dihedral group, and by Urz\'ua \cite{urzua2009} when $Q$ is the quaternion group of order $8$. In fact, it turns out that these are essentially all projective realizations of finite groups.

Let $\Lambda=(\Lambda_1,\Lambda_2,\Lambda_3)$ be a projective realization of the finite group $G$. $\Lambda$ is called \textit{algebraic,} if $\Lambda_1\cup \Lambda_2\cup \Lambda_3$ is contained in a cubic curve $\mathcal{C}$. If $\mathcal{C}$ is the union of a line and an irreducible conic then $\Lambda$ is of \textit{conic-line type}. If $\mathcal{C}$ is the union of three lines $\ell_1,\ell_2,\ell_3$ then $\Lambda$ is of \textit{triangular} or of \textit{pencil} type, depending on if $\ell_1,\ell_2,\ell_3$ form a triangle or have a point in common. Finally, we say that $\Lambda$ is of \textit{tetrahedron} type, if $\Lambda_1\cup \Lambda_2\cup \Lambda_3$ is contained in the union of the six lines of a complete quadrilateral. In this case, each component $\Lambda_i$ is contained in the union of two lines. 

The following almost complete classification of such $3$-nets is proven in \cite{knp_3}.
\begin{theorem}
\label{mainteo} In the projective plane $PG(2,\mathbb{K})$ defined over an algebraically closed field $\mathbb{K}$ of characteristic $p\geq 0$, let $(\Lambda_1,\Lambda_2,\Lambda_3)$ be a dual $3$-net of order $n\geq 4$ which realizes a group $G$. If either $p=0$ or $p>n$ then one of the following holds.
\begin{enumerate}[(I)]
\item $G$ is either cyclic or the direct product of two cyclic groups, and $(\Lambda_1, \Lambda_2, \Lambda_3)$ is algebraic.
\item $G$ is dihedral and $(\Lambda_1,\Lambda_2,\Lambda_3)$ is of tetrahedron type.
\item $G$ is the quaternion group of order $8$.
\item $G$ has order $12$ and is isomorphic to $\rm{Alt}_4$.
\item $G$ has order $24$ and is isomorphic to $\rm{Sym}_4$.
\item $G$ has order $60$ and is isomorphic to $\rm{Alt}_5$.
\end{enumerate}
\end{theorem}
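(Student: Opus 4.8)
The plan is to separate the \emph{algebraic} case, in which $\Lambda_1\cup\Lambda_2\cup\Lambda_3$ is contained in a plane cubic, from the non-algebraic one, and to show that a non-algebraic realization confines $G$ to a short list. In the algebraic case I would invoke the group law on a cubic curve $\mathcal{C}$: its non-singular points form an abelian group $(\mathcal{C}^{\mathrm{sm}},\oplus)$ in which, after placing the origin at an inflection point, three points are collinear exactly when their $\oplus$-sum is the identity. Reading the dual $3$-net incidences through this law forces each component $\Lambda_i$ to be a coset of one and the same finite subgroup $K\le\mathcal{C}^{\mathrm{sm}}$, so that a suitable isotopism identifies $G$ with $K$. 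Since over a field of characteristic zero the finite subgroups of the smooth locus of a plane cubic are cyclic or direct products of two cyclic groups, this yields conclusion (I), and shows that a realization is algebraic \emph{if and only if} $G$ is cyclic or a direct product of two cyclic groups.

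The ``if'' direction just used --- a dual $3$-net realizing an abelian group of order $\ge 4$ is algebraic --- is the first substantial step, and I would prove it by a Cayley--Bacharach propagation. One chooses eight points of the configuration in sufficiently general position, through which there passes a pencil of cubics; the abundance of collinear triples produced by the group law then allows one to adjoin the remaining configuration points one at a time, each forced onto a member of the pencil, until all $3n$ points are seen to lie on a single cubic. The hypotheses $n\ge 4$ and $p=0$ (or $p>n$) are exactly what make the needed base points available in general position --- no three collinear, no six on a conic, no coincidences forced by the group law.

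For a non-abelian $G$ I would exploit the subgroup lattice together with the two previous steps. Every subgroup $H\le G$, and more generally every coset-type subconfiguration, spans a sub-dual-$3$-net realizing a subquotient of $G$; whenever an abelian subgroup $A$ with $|A|\ge 4$ occurs, the associated subnet is algebraic and lies on a uniquely determined cubic $\mathcal{C}_A$, and by B\'ezout two such cubics that share sufficiently many points must coincide. Hence, if the abelian subgroups of order $\ge 4$ knit $G$ together in a connected fashion, the entire realization is forced onto a single cubic and $G$ is abelian, a contradiction. This cuts the admissible non-abelian groups down to ones with very restricted subgroup structure, and the survivors are then treated individually: the dihedral groups, where one shows the tetrahedron-type configuration is the only possibility; the quaternion group of order $8$, realized by Urz\'ua's construction; and the exceptional groups $\Alt_4$, $\Sym_4$, $\Alt_5$, which are isolated through an analysis of their normalizers and element orders together with an explicit coordinatization of the collinear triples.

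The main obstacle, I expect, is precisely this last non-abelian, non-dihedral analysis: one must exclude every group outside the list while keeping careful track of which subconfigurations are, and which are not, forced onto cubics, and the three exceptional groups are genuinely special --- indeed the statement is only ``almost complete'' because it leaves open whether $\Alt_4$, $\Sym_4$ or $\Alt_5$ admits a realization at all. A persistent secondary difficulty is the combinatorial bookkeeping in the Cayley--Bacharach step, where degenerate positions of the chosen base points have to be ruled out, and it is there that the numerical hypotheses on $n$ and on the characteristic are genuinely used.
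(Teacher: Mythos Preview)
This theorem is not proved in the present paper; it is quoted from \cite{knp_3} as background (note the sentence immediately preceding the statement: ``The following almost complete classification of such $3$-nets is proven in \cite{knp_3}''). There is therefore no proof in this paper to compare your proposal against.

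For what it is worth, your sketch does track the large-scale strategy of the Korchm\'aros--Nagy--Pace argument: one first shows that a realization of an abelian group of order at least four must lie on a plane cubic (a Cayley--Bacharach style propagation), reads off from the group law on the cubic that $G$ is then cyclic or a product of two cyclic groups, and in the non-abelian case exploits the algebraic sub-$3$-nets coming from abelian subgroups to constrain $G$. The actual argument in \cite{knp_3}, however, is considerably more intricate than your outline conveys. The reduction from an arbitrary non-abelian $G$ to the six listed possibilities is not a single connectivity argument on abelian subgroups; it passes through a structured case analysis driven by the existence of normal subgroups, the center of a Sylow $2$-subgroup, and the way translates of a fixed subgroup sit on lines or conics. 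Several bespoke projective lemmas (on when two algebraic sub-$3$-nets force a common cubic, on configurations of cosets lying on lines, and on the tetrahedron structure for dihedral groups) carry real weight, and your phrase ``knit $G$ together in a connected fashion'' does not capture them. So the architecture of your sketch is correct, but as written it is a plan rather than a proof, and in any case the paper under review simply cites the result.
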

A computer aided exhaustive search shows that if $p=0$ then (IV) (and hence (V), (VI)) does not occur; see \cite{np}. It has been conjectured that this holds true in any characteristic. 

Not much is known about projective realizations of nonassociative quasigroups. Nonassociative quasigroups of order $5$ are all isostrophes, their projective realization was given by Stipins \cite{sj2004}. Up to isostrophy there are $12$ quasigroups of order $6$, Urz\'ua \cite{urzua2009} computed their realizations for $\K=\mathbb{C}$, or showed that no such realization exists.

\section{Abstract light dual multinets}

We define a generalization of the concept of an abstract dual $3$-net. 
\begin{definition}
An abstract light dual multinet labeled by the quasigroup $Q$ is a pair $(\mathcal{P},\mathcal{M})$ with the following properties:
\begin{enumerate}[(1)]
\item $\mathcal{P}$ is the disjoint union of the three subsets $\mathcal{P}_1$, $\mathcal{P}_2$ and $\mathcal{P}_3$.
\item $\mathcal{M}$ consists of subsets of $\mathcal{P}$ such that for any ${p},{q}\in \mathcal{P}$ there is at most one element of $\mathcal{M}$, which contains ${p}$ and ${q}$. 
\item There are bijections $\alpha_i:Q\to \mathcal{P}_i$ ($i=1,2,3$), such that for any $x,y\in Q$ there is an element $S\in \mathcal{M}$ containing $\alpha_1(x)$, $\alpha_2(y)$ and $\alpha_3(x\cdot y)$. 
\item $|\mathcal{M}|>1$. 
\end{enumerate}
\end{definition}

In the sequel, $(\mathcal{P},\mathcal{M})$ denotes an abstract light dual multinet, labeled by the finite quasigroup $Q$ of order $n$. The labeling maps are $\alpha_1,\alpha_2,\alpha_3$. We usually denote the multinet just by $\mathcal{M}$. We call the elements of $\mathcal{P}$ \textit{points,} and the elements of $\mathcal{M}$ \textit{lines} or \textit{blocks}. Property (4) prohibits a light dual multinet to be degenerate, that is, no block can contain all points.

\begin{lemma} \label{lm:lengthdef}
Let $B\in\mathcal{M}$ be a block of the abstract light dual multinet $(\mathcal{P},\mathcal{M})$. Let $S_i=\{x\in Q\mid \alpha_i(x)\in B\}$ be the subsets of $Q$ corresponding to the intersections of $B$ and $\mathcal{P}_i$ ($i=1,2,3$). Then, $(S_1,S_2,S_3)$ is a subsquare of $Q$; its order is called the length of $B$ with respect to $(\mathcal{P},\mathcal{M})$.
\end{lemma}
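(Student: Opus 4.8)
The plan is to unpack the definitions of the three sets $S_1,S_2,S_3$ and verify the three containment conditions $S_1\cdot S_2\subseteq S_3$, $S_1\setminus S_3\subseteq S_2$, $S_3/S_2\subseteq S_1$ directly. The key tool is property (2) of an abstract light dual multinet: any two points lie on at most one common block. This uniqueness is what will force the collinearity relation coming from the quasigroup operation to ``stay inside'' $B$.

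First I would prove $S_1\cdot S_2\subseteq S_3$. Take $x\in S_1$ and $y\in S_2$, so by definition $\alpha_1(x)\in B$ and $\alpha_2(y)\in B$. By property (3) there is a block $S\in\mathcal{M}$ containing $\alpha_1(x),\alpha_2(y),\alpha_3(x\cdot y)$. Since $\alpha_1(x)$ and $\alpha_2(y)$ are two points lying on both $B$ and $S$, and — crucially — $\alpha_1(x)\ne\alpha_2(y)$ because $\mathcal{P}_1$ and $\mathcal{P}_2$ are disjoint, property (2) gives $S=B$. Hence $\alpha_3(x\cdot y)\in B$, i.e. $x\cdot y\in S_3$. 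This is the main idea, and the other two inclusions are variations on it.

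Next, for $S_1\setminus S_3\subseteq S_2$: let $x\in S_1$, $z\in S_3$ and set $y=x\setminus z$, so $x\cdot y=z$. Then $\alpha_1(x)\in B$, $\alpha_3(z)=\alpha_3(x\cdot y)\in B$, and property (3) gives a block through $\alpha_1(x),\alpha_2(y),\alpha_3(z)$; since $\alpha_1(x)\ne\alpha_3(z)$ (disjointness of $\mathcal{P}_1,\mathcal{P}_3$) and both lie on $B$, uniqueness forces that block to be $B$, so $\alpha_2(y)\in B$ and $y\in S_2$. The inclusion $S_3/S_2\subseteq S_1$ is handled symmetrically, using $x=z/y$ and the disjointness of $\mathcal{P}_2,\mathcal{P}_3$. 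Once all three containments hold, $(S_1,S_2,S_3)$ is a subsquare by definition, and the remark following the definition of subsquare already records that $|S_1|=|S_2|=|S_3|$, so the notion of length (this common cardinality) is well-defined.

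I do not expect a genuine obstacle here; the statement is essentially a repackaging of the axioms. The only point requiring a little care is the repeated appeal to property (2), which needs the two points in question to be \emph{distinct} — this is exactly why the disjointness of $\mathcal{P}_1,\mathcal{P}_2,\mathcal{P}_3$ in property (1) is used at each step. One should also note at the outset the degenerate possibility that some $S_i$ is empty: if, say, $B\cap\mathcal{P}_1=\emptyset$ then $S_1=\emptyset$ and all three inclusions hold vacuously, with the subsquare being the trivial one of order $0$; but in the cases of interest $B$ meets at least two components and the argument above then propagates nonemptiness to all three, recovering the positive-length situation described in the introduction.
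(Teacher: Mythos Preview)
Your proof is correct and follows exactly the same approach as the paper's own proof, which tersely states that $\alpha_3(x\cdot y)\in B$ for $x\in S_1$, $y\in S_2$ and says the other two inclusions are shown ``in a similar way.'' You have simply made explicit the appeals to properties (1), (2), and (3) that the paper leaves implicit, and added a remark on the degenerate empty case.
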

\begin{proof}
For any $x\in S_1$, $y\in S_2$, we have $\alpha_3(x\cdot y) \in B$, hence $S_1\cdot S_2\subseteq S_3$. In a similar way one shows $S_1\setminus S_3\subseteq S_2$ and $S_3/S_2\subseteq S_1$. 
\end{proof}

This lemma enables us to define a special class of abstract light dual multinets. Let $(Q,\cdot)$ be a quasigroup and $(S_1,S_2,S_3)$ be a subsquare of $Q$. Let $(\mathcal{P},\mathcal{N})$ be the abstract dual $3$-net of $Q$. Define the "superline"
\[B'=\alpha_1(S_1) \cup \alpha_2(S_2) \cup \alpha_3(S_3)\]
as the union of $r^2$ lines of $\mathcal{N}$, where $r=|S_1|=|S_2|=|S_3|$. Put
\[\mathcal{M}= \{B\in \mathcal{N} \mid 1\geq |B\cap \alpha_i(S_i)| \} \cup \{B'\}. \]
Then $\mathcal{M}$ is a $Q$-labeled abstract light dual multinet with $n^2-r^2$ lines of length $1$ and one line of length $r$. We denote this abstract light dual multinet by $\mathcal{M}_{S_1,S_2,S_3}$. In the special case when $S_1=S_2=S_3=S$ is a subquasigroup of $Q$, we write $\mathcal{M}_S$.

The labeling quasigroup of $\mathcal{M}$ is not uniquely determined. In the same way as we showed for dual $3$-nets, one can show that if the light dual multinet $\mathcal{M}$ is labeled by the quasigroup $Q$ and $R$ is an isostrophic quasigroup with $Q$ then $\mathcal{M}$ can be labeled by $R$ as well. Conversely, assume that $\mathcal{M}$ can be labeled by the quasigroup $(Q,\cdot)$ and $(R,\oplus)$.
%via the labeling maps $\alpha_1,\alpha_2,\alpha_3$ and $\beta_1,\beta_2,\beta_3$, respectively. 
Then we call the quasigroups $Q$ and $R$ are \textit{$\mathcal{M}$-isostrophes. }

\begin{lemma} \label{lm:wellindexing2}
Let $\Sigma=(\mathcal{P},\mathcal{M})$ be a finite abstract light dual multinet of order $n$, labeled by the quasigroup $(Q,\cdot)$. Let $\ell$ be a superline of length $r$. Then $\Sigma$ is isomorphic to an abstract light dual multinet $\Sigma'=(\mathcal{P}',\mathcal{M}')$ such that the following hold:
%We say that $\Sigma$ is \textit{well-indexed} if the following hold:
\begin{enumerate}[(WM1)]
\item $\mathcal{P}'$ is the disjoint union of $\mathcal{P}'_1=\{1,\ldots,n\}$, $\mathcal{P}'_2=\{n+1,\ldots,2n\}$, and $\mathcal{P}'_3=\{2n+1,\ldots,3n\}$.
\item $\ell'=\{1,\ldots,r, n+1,\ldots,n+r, 2n+1,\ldots,2n+r\}$ is a superline of length $r$ of $\Sigma'$.
\item For each $j\in \{r+1,\ldots,n\}$, the triples $1,n+j,2n+j$ and $j,n+1,2n+j$ are collinear in $\mathcal{M}'$. 
\end{enumerate}
\end{lemma}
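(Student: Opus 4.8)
The plan is to follow the strategy of Lemma~\ref{lm:wellindexing1}, adding one bookkeeping step that forces the subsquare attached to $\ell$ into the ``corner'' position $(\{1,\dots,r\},\{1,\dots,r\},\{1,\dots,r\})$.

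\textbf{Step 1 (a good loop labeling).} By Lemma~\ref{lm:lengthdef}, the superline $\ell$ determines a subsquare $(S_1,S_2,S_3)$ of $(Q,\cdot)$ with $|S_1|=|S_2|=|S_3|=r$, where $S_i=\{x\in Q\mid \alpha_i(x)\in\ell\}$. I would pick $u\in S_1$, $v\in S_2$ and pass to the principal loop isotope $(Q,\oplus)$, $x\oplus y=x/v\cdot u\backslash y$, which by Subsection~2.2 is a loop with unit $e=u\cdot v$ in which $S_3$ is a subloop. As noted after Lemma~\ref{lm:lengthdef} (using the relabeling recipe of Subsection~2.4 for the isotopism $\gamma_1=R_v$, $\gamma_2=L_u$, $\gamma_3=\id$), $\Sigma$ is then labeled by $(Q,\oplus)$ via $\hat\alpha_1=\alpha_1 R_v^{-1}$, $\hat\alpha_2=\alpha_2 L_u^{-1}$, $\hat\alpha_3=\alpha_3$. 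In this labeling the subsquare attached to $\ell$ is $(\hat S_1,\hat S_2,\hat S_3)$ with $\hat S_1=S_1\cdot v$, $\hat S_2=u\cdot S_2$, $\hat S_3=S_3$; since $R_v,L_u$ are bijections of $Q$ and $S_1\cdot v,\ u\cdot S_2\subseteq S_1\cdot S_2\subseteq S_3$, a cardinality count gives $\hat S_1=\hat S_2=\hat S_3=S_3=:S$. Moreover $e=u\cdot v\in S_1\cdot S_2\subseteq S$, so the unit of $(Q,\oplus)$ lies in the common set $S$.

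\textbf{Step 2 (renumbering).} As in Lemma~\ref{lm:wellindexing1}, I would enumerate $Q=\{q_1,\dots,q_n\}$ with $q_1=e$ and, using $e\in S$ and $|S|=r$, with $\{q_1,\dots,q_r\}=S$. Transporting $\oplus$ along $\psi\colon i\mapsto q_i$ yields a loop $(\{1,\dots,n\},*)$ isomorphic to $(Q,\oplus)$, with unit $1$ and subloop $\{1,\dots,r\}$, and $\Sigma$ is labeled by it via $\beta_i=\hat\alpha_i\circ\psi$; the subsquare of $\ell$ in this labeling is $(\{1,\dots,r\},\{1,\dots,r\},\{1,\dots,r\})$. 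Now set $\mathcal{P}'_1=\{1,\dots,n\}$, $\mathcal{P}'_2=\{n+1,\dots,2n\}$, $\mathcal{P}'_3=\{2n+1,\dots,3n\}$, define $\varphi\colon\mathcal{P}\to\mathcal{P}'_1\cup\mathcal{P}'_2\cup\mathcal{P}'_3$ by $\varphi(\beta_1(x))=x$, $\varphi(\beta_2(y))=n+y$, $\varphi(\beta_3(z))=2n+z$, and let $\Sigma'=(\varphi(\mathcal{P}),\{\varphi(B)\mid B\in\mathcal{M}\})$. Then $\varphi$ is an isomorphism $\Sigma\to\Sigma'$ and, since $\varphi\circ\beta_i=\alpha'_i$ with $\alpha'_1(x)=x$, $\alpha'_2(y)=n+y$, $\alpha'_3(z)=2n+z$, the structure $\Sigma'$ is labeled by $(\{1,\dots,n\},*)$ via the $\alpha'_i$.

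\textbf{Step 3 (verifying (WM1)--(WM3)).} Property (WM1) is immediate from the construction of $\varphi$. For (WM2), note $\ell\cap\mathcal{P}_i=\beta_i(\{1,\dots,r\})$ for $i=1,2,3$, so $\varphi(\ell)=\{1,\dots,r\}\cup\{n+1,\dots,n+r\}\cup\{2n+1,\dots,2n+r\}$, and this block has length $r$ since its subsquare w.r.t.\ the $\alpha'_i$ is $(\{1,\dots,r\},\{1,\dots,r\},\{1,\dots,r\})$. For (WM3), $1$ is the unit of $(\{1,\dots,n\},*)$, hence $1*j=j*1=j$ for all $j$; by property~(3) in the definition of an abstract light dual multinet there are blocks of $\mathcal{M}'$ containing $\{\alpha'_1(1),\alpha'_2(j),\alpha'_3(j)\}=\{1,n+j,2n+j\}$ and $\{\alpha'_1(j),\alpha'_2(1),\alpha'_3(j)\}=\{j,n+1,2n+j\}$, so these triples are collinear in $\mathcal{M}'$.

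The only step that is not a routine transcription of Lemma~\ref{lm:wellindexing1} is Step~1: one must observe that a single principal isotope, built from one element of $S_1$ and one of $S_2$, simultaneously merges the three a priori distinct sets $S_1,S_2,S_3$ into one common set $S$ \emph{and} places the loop unit inside $S$. Once this is granted, Steps~2 and~3 are the same renumbering argument as in the dual $3$-net case.
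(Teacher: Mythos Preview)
Your proof is correct and follows essentially the same approach as the paper's own proof, which is a two-sentence sketch saying to mimic Lemma~\ref{lm:wellindexing1} while choosing the loop isotope so that $\ell$ corresponds to a subloop $S$ and enumerating $Q$ with $q_1=e$ and $S=\{q_1,\dots,q_r\}$. You have carefully spelled out the details the paper leaves implicit---in particular the observation in Step~1 that the single principal isotope built from $u\in S_1$, $v\in S_2$ collapses $(S_1,S_2,S_3)$ to the common set $S_3$ containing the unit, which is precisely what makes the ``$\ell$ corresponds to a subloop'' clause work.
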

\begin{proof}
The proof is similar to the proof of Lemma \ref{lm:wellindexing1}, with two additional considerations. First, the loop isotope $(Q,\circ,e)$ must be chosen such that the superline $\ell$ corresponds to a subloop $S$. Second, the elements $q_1,\ldots,q_n$  of $Q$ must be chosen such that $q_1=e$ and $S=\{q_1,\ldots,q_r\}$. 
\end{proof}

\begin{definition}
An abstract light dual multinet $\Sigma'$ is said to be \textit{well-indexed w.r.t. a superline of length $r$,} if conditions (WM1)-(WM3) of Lemma \ref{lm:wellindexing2} hold. 
\end{definition}

\section{Classification of abstract light dual multinets of order $6$}

We start with an obvious lemma.

\begin{lemma} \label{lm:subsquares}
Let $Q$ be a quasigroup of order $6$. Then all proper subsquares of $Q$ have order $1$, $2$ or $3$. A subsquare of order $3$ cannot contain a subsquare of order $2$. Any proper subsquare can be generated by a triple of the form $(\{x,y\},\{z\},\emptyset)$, where $x,y,z\in Q$, $x\neq y$. \qed
\end{lemma}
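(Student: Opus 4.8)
The plan is to treat the three assertions in turn; each is elementary, and the common tool is the principal loop isotope construction from the Preliminaries, which turns a subsquare into a subloop. First I would bound the order of a proper subsquare. Let $(S_1,S_2,S_3)$ be proper, of order $r$. Since it is not $(Q,Q,Q)$, some $S_i$ is a proper subset of $Q$; as $|S_1|=|S_2|=|S_3|=r$, in fact $S_3\subsetneq Q$. Choosing $u\in S_1$, $v\in S_2$ and passing to the loop $(Q,\oplus)$ with $x\oplus y=(x/v)\cdot(u\setminus y)$ and unit $u\cdot v$, the set $S_3$ becomes a subloop of order $r$. Being a proper subloop of a loop of order $6$, it has order at most $6/2=3$; so no proper subsquare has order $4$ or $5$, and since a nontrivial subsquare has order at least $2$, every proper subsquare has order $2$ or $3$ (in particular its order lies in $\{1,2,3\}$).

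Next I would show an order-$3$ subsquare $(S_1,S_2,S_3)$ cannot contain an order-$2$ subsquare $(T_1,T_2,T_3)$, meaning $T_i\subseteq S_i$ for $i=1,2,3$. The key point is to choose the isotopy parameters inside the \emph{smaller} sets: take $u\in T_1$ and $v\in T_2$, and form $(Q,\oplus)$ as above. Since $u\in S_1$ and $v\in S_2$, the set $S_3$ is a subloop of order $3$; and a loop of order $3$ is forced by the Latin-square conditions to be the cyclic group of order $3$. Since moreover $u\in T_1$ and $v\in T_2$, the subsquare identities for $(T_1,T_2,T_3)$ give $T_3/v\subseteq T_1$ and $u\setminus T_3\subseteq T_2$, hence $T_3\oplus T_3=(T_3/v)\cdot(u\setminus T_3)\subseteq T_1\cdot T_2\subseteq T_3$; thus $T_3$ is a subset of size $2$ of a group of order $3$ closed under the group operation, contradicting Lagrange's theorem. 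This step is the only one that is not completely automatic: one must ensure $S_3$ and $T_3$ are \emph{simultaneously} closed under $\oplus$, which is exactly why $u,v$ are taken from $T_1,T_2$ rather than merely from $S_1,S_2$.

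Finally I would prove the generation statement. Let $(S_1,S_2,S_3)$ be a proper subsquare; by the first step its order $r$ is $2$ or $3$. Pick distinct $x,y\in S_1$ and any $z\in S_2$ (possible since $|S_1|=r\ge 2$ and $|S_2|=r\ge 1$), and let $(T_1,T_2,T_3)$ be the subsquare generated by $(\{x,y\},\{z\},\emptyset)$. Since $(S_1,S_2,S_3)$ is a subsquare with $x,y\in S_1$ and $z\in S_2$, minimality forces $T_i\subseteq S_i$ for all $i$; and $|T_1|\ge 2$ together with $T_1\subsetneq Q$ makes $(T_1,T_2,T_3)$ proper, so of order $2$ or $3$. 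If $r=2$, then $\{x,y\}=S_1$, so $S_1\subseteq T_1\subseteq S_1$ gives $T_1=S_1$ and then $T_i=S_i$ for all $i$ by cardinality. If $r=3$, then $(T_1,T_2,T_3)$ cannot have order $2$, since otherwise the order-$3$ subsquare $(S_1,S_2,S_3)$ would contain an order-$2$ subsquare, contradicting the second step; hence $(T_1,T_2,T_3)$ has order $3$, and $T_i\subseteq S_i$ with $|T_i|=|S_i|$ again yields equality. In either case the subsquare generated by $(\{x,y\},\{z\},\emptyset)$ is precisely $(S_1,S_2,S_3)$.

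I do not expect a genuine obstacle — as the authors note, the lemma is essentially obvious. The only place that rewards care is the middle assertion, where the isotopy parameters $u,v$ must be chosen from $T_1,T_2$ so that both $S_3$ and $T_3$ are subloops of $(Q,\oplus)$, together with the (standard) observation that every quasigroup, hence every loop, of order $3$ is the cyclic group of order $3$.
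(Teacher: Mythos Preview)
Your proof is correct. The paper itself gives no proof at all: the lemma is introduced as ``an obvious lemma'' and the statement ends with a \qed, so there is nothing to compare against beyond noting that you have supplied a careful argument where the authors were content to wave their hands. Your use of the principal loop isotope from Section~2.2 to reduce each assertion to a statement about subloops is exactly the intended mechanism, and your observation that the isotopy parameters $u,v$ must be chosen from $T_1,T_2$ (not merely $S_1,S_2$) in the second step is the one genuinely non-automatic point, which you handle correctly.
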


Lemma \ref{lm:lengthdef} and Lemma \ref{lm:subsquares} imply that a nontrivial abstract light dual multinet of order $6$ can only have lines of length $1$, $2$ or $3$. Moreover, Lemma \ref{lm:subsquares} provides us an effective method to generate all proper subsquares of a quasigroup of order $6$. The number of subsquares of order $2$ and $3$ of quasigroups of order $6$ are given in \cite[Figure 4.2.2]{DenesKeedwell}.

\begin{table} 
\caption{Isomorphism classes of abstract light dual multinets with one superline\label{table:ldms_classification}}
\begin{tabular}{|c|c|l|c|}
\hline
id&(A)&(B)&$\Aut(\mathcal{M}_{S_1,S_2,S_3})$\\ \hline
$M_{1}$ & 3 & 6.1, 6.9 & $((C_3 \times C_3 \times C_3) : C_3) : (C_2 \times C_2)$ \\ 
$M_{2}$ & 3 & 6.2, 6.3, 6.9  & $((C_3 \times C_3 \times C_3) : C_3) : (C_2 \times C_2)$ \\ 
$M_{3}$ & 2 & 6.1, 6.4  & $C_2 \times S_4$ \\ 
$M_{4}$ & 2 & 6.2, 6.5 & $C_2 \times S_4$ \\ 
$M_{5}$ & 2 & 6.4 & $C_2 \times S_4$ \\ 
$M_{6}$ & 2 & 6.7 & $C_2 \times S_4$ \\ 
$M_{7}$ & 2 & 6.5 & $C_2 \times D_8$ \\ 
$M_{8}$ & 2 & 6.8, 6.11 & $C_2 \times D_8$ \\ 
$M_{9}$ & 2 & 6.10 & $C_2 \times D_8$ \\ 
$M_{10}$ & 2 & 6.11 & $C_2 \times D_8$ \\ 
$M_{11}$ & 2 & 6.9, 6.12 & $C_2 \times C_2 \times C_2$ \\ 
$M_{12}$ & 2 & 6.12 & $C_2 \times C_2 \times C_2$ \\ 
$M_{13}$ & 2 & 6.6, 6.7 & $S_3$ \\ 
$M_{14}$ & 2 & 6.5, 6.10 & $C_2 \times C_2$ \\ 
$M_{15}$ & 2 & 6.7, 6.10 & $C_2 \times C_2$ \\ 
$M_{16}$ & 2 & 6.11, 6.12 & $C_2 \times C_2$ \\ \hline 
\end{tabular}
\end{table}

\begin{theorem} \label{thm:16classes}
There are $16$ isomorphism classes of abstract light dual multinets with a unique superline. In Table \ref{table:ldms_classification} we listed (A) the length of the superlines, (B) the D\'enes-Keedwell numbers of the labeling quasigroups, and the structure of the automorphism group. 
\end{theorem}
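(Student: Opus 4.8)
The plan is to reduce the classification to a bounded, explicit enumeration over the twelve main classes of quasigroups of order $6$, followed by a computer-assisted isomorphism test.

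First I would show that every abstract light dual multinet of order $6$ with a unique line $B'$ of length $r>1$ is of the form $\mathcal{M}_{S_1,S_2,S_3}$. By Lemma~\ref{lm:lengthdef}, $B'$ corresponds to a subsquare $(S_1,S_2,S_3)$ of the labeling quasigroup $Q$, and Lemma~\ref{lm:subsquares} forces $r\in\{2,3\}$. Property~(2) in the definition of a light dual multinet forbids the $r^2$ lines of the dual $3$-net of $Q$ all of whose points lie on $B'$; property~(3) together with the uniqueness of the long line forces each of the remaining $n^2-r^2$ triples of the dual $3$-net to be a line and forbids any further line. Hence $\mathcal{M}=\mathcal{M}_{S_1,S_2,S_3}$, and conversely each $\mathcal{M}_{S_1,S_2,S_3}$ has a unique long line by construction. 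So it suffices to classify up to isomorphism the structures $\mathcal{M}_{S_1,S_2,S_3}$, where $Q$ runs over quasigroups of order $6$ and $(S_1,S_2,S_3)$ over their subsquares of order $2$ or $3$.

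To make the enumeration finite, note that an isostrophism of $Q$ induces an isomorphism of dual $3$-nets carrying subsquares to subsquares, so up to isomorphism $\mathcal{M}_{S_1,S_2,S_3}$ depends only on the main class of $Q$; by D\'enes--Keedwell there are exactly twelve such classes for order $6$, and one fixes a representative quasigroup for each from \cite{DenesKeedwell}. By the last assertion of Lemma~\ref{lm:subsquares}, every proper subsquare is generated by a triple $(\{x,y\},\{z\},\emptyset)$ with $x\ne y$, so closing the at most $\binom{6}{2}\cdot 6=90$ such triples under the subsquare conditions yields all proper subsquares of each representative (their numbers of orders $2$ and $3$ can be checked against \cite[Figure~4.2.2]{DenesKeedwell}). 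For each representative and each of its subsquares of order $2$ or $3$ one writes down the incidence structure $\mathcal{M}_{S_1,S_2,S_3}$ and, via Lemma~\ref{lm:wellindexing2}, a well-indexed normal form, obtaining a finite explicit list.

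Finally I would sort this list up to isomorphism. The long line of $\mathcal{M}_{S_1,S_2,S_3}$ is the unique block of size $3r\ge 6$ (all others have size $3$), and the point side and the block side of the Levi graph have different sizes; hence an abstract isomorphism of two such multinets is exactly an isomorphism of their incidence graphs. Canonicalizing these graphs by computer (the code is given in the appendix) partitions the list into exactly $16$ classes, and for each class one reads off the common length $r$, the set of D\'enes--Keedwell numbers of the labeling representatives, and the automorphism group of the incidence graph, which equals $\Aut(\mathcal{M}_{S_1,S_2,S_3})$; this is Table~\ref{table:ldms_classification}. The point I expect to be the real obstacle is the completeness of column~(B): non-isostrophic quasigroups can label the same multinet, since deleting the $r^2$ inner lines erases all information about the product on $S_1\times S_2$, so any two order-$6$ quasigroups that admit the subsquare and agree off $S_1\times S_2$ give literally the same structure, with still further coincidences produced by isotopy. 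Certifying that each abstract class is matched with all of its labeling main classes, and that the $16$ classes are pairwise non-isomorphic, is exactly what the canonical-form computation provides; the remaining steps are bounded and routine.
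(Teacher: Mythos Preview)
Your proposal is correct and follows the same approach as the paper: enumerate the twelve main classes of order-$6$ quasigroups, list their proper subsquares via Lemma~\ref{lm:subsquares}, form the structures $\mathcal{M}_{S_1,S_2,S_3}$, and sort them up to isomorphism by computer (the paper's proof simply points to the GAP/SageMath code in the appendix). Your explicit reduction showing that every such multinet must equal some $\mathcal{M}_{S_1,S_2,S_3}$, and your remark explaining why column~(B) can list several main classes for one $M_i$, are welcome justifications that the paper leaves implicit.
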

\begin{proof}
Using the GAP4 \cite{GAP4} computer algebra system and the GAP package DESIGN \cite{DESIGN}, we can construct all abstract light dual multinets of order $6$ with one superline. The commands can be accessed using the SageMath \cite{Sage8.2} interface. 
%The command \verb|BlockDesignIsomorphismClassRepresentatives| classifies the resulting block designs up to isomorphism. 
All computations are done with well-indexed abstract light dual multinets.
\end{proof}

\begin{proposition} \label{pr:superlinr3}
Let $(\mathcal{P},\mathcal{M})$ be an abstract light dual multinet of order $6$, which contains a superline of length $3$. Then $\mathcal{M}$ can be labeled either by the cyclic group or by the dihedral group of order $6$. 
\end{proposition}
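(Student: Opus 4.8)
The plan is to put $\mathcal{M}$ into well-indexed form with respect to its long line and then describe the rest of $\mathcal{M}$ completely.

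By Lemma~\ref{lm:wellindexing2} I may assume that $\mathcal{M}$ is well-indexed with respect to the superline $\ell$ of length $3$; then $\mathcal{M}$ is labeled by a loop $L$ on $\{1,\dots,6\}$ in which $S=\{1,2,3\}$ is a subloop and $\ell$ is the union of the three point classes $\alpha_i(S)$. Since $|S|=|L|/2$, the subloop $S$ is normal and $L/S\cong C_2$; writing $T=L\setminus S$ this gives the grading $S\cdot S\subseteq S$, $S\cdot T\subseteq T$, $T\cdot S\subseteq T$, $T\cdot T\subseteq S$, and by counting also $s\cdot S=S$, $s\cdot T=T$ for $s\in S$ and $t\cdot S=T$, $t\cdot T=S$ for $t\in T$. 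Because $\ell$ is the only line of length $>1$, every other line is a single triple $\{\alpha_1(x),\alpha_2(y),\alpha_3(xy)\}$, and by the grading exactly one of $x,y,xy$ lies in $S$. Thus, with $U_i=\alpha_i(T)$, the short lines split into three families: type~I ($x\in S$) on $\alpha_1(S),U_2,U_3$; type~II ($y\in S$) on $U_1,\alpha_2(S),U_3$; and type~III ($xy\in S$) on $U_1,U_2,\alpha_3(S)$. The cardinality identities above show that each family, restricted to its three $3$-point classes, is an abstract dual $3$-net of order $3$, and that no line of $\mathcal{M}$ meets all of $U_1,U_2,U_3$.

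Up to isomorphism there is only one abstract dual $3$-net of order $3$ (it is labeled by $C_3$, the unique quasigroup of order $3$). Hence I can choose $\mathbb{Z}_3$-labels on $\alpha_1(S),U_2,U_3$ making the type~I lines the triples $\{a_1^i,u_2^j,u_3^{i+j}\}$; using the labeling freedom still available on $U_1$ and $\alpha_3(S)$, the type~III lines can be brought to $\{u_1^k,u_2^j,a_3^{k+j}\}$; and then the type~II lines, joining the now-fixed classes $U_1$ and $U_3$ through the still-unlabeled class $\alpha_2(S)$, are recorded by an order-$3$ Latin square $g$ on $\mathbb{Z}_3\times\mathbb{Z}_3$, determined only up to a permutation of the symbols. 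Since there are exactly two Latin squares of order $3$ up to permutation of symbols, say $g(k,m)=k+m$ and $g(k,m)=k-m$, there are at most two isomorphism classes of abstract light dual multinet of order $6$ with a (unique) superline of length $3$. Running the same normalization on the two group examples $\mathcal{M}_S$ then finishes the argument: for $S\cong C_3$ the index-$2$ subgroup of $C_6$ the construction produces the square $g(k,m)=m-k$, and for $S\cong C_3$ the rotation subgroup of the dihedral group of order $6$ it produces $g(k,m)=-(k+m)$; checking that these give non-isomorphic multinets confirms that they are the two classes, so every abstract light dual multinet of order $6$ with a superline of length $3$ is labeled by the cyclic group or the dihedral group of order $6$.

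The delicate point is the bookkeeping in the normalization step — keeping exact track of which labeling freedom survives after fixing the type~I and type~III families, and using precisely that to pin down $g$ up to symbol permutation — together with the explicit separation of the two group cases at the end; this last separation can alternatively be read off from the computer classification of Theorem~\ref{thm:16classes}, where the multinets in question are the classes $M_1$ and $M_2$ of Table~\ref{table:ldms_classification}.
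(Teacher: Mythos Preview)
Your argument is correct and takes a genuinely different route from the paper. The paper's proof is a one-line appeal to the computer classification of Theorem~\ref{thm:16classes}: Table~\ref{table:ldms_classification} records that the two isomorphism classes $M_1,M_2$ with a superline of length $3$ admit labelings by the D\'enes--Keedwell classes $6.1$ and $6.2$, which are $C_6$ and $S_3$. You instead give a structural argument by hand: the index-$2$ subloop $S$ is normal, the resulting $C_2$-grading splits the $27$ short lines into three dual $3$-nets of order $3$ (your types I--III), two of these can be put in standard $\mathbb{Z}_3$-form, and the remaining one is then a Latin square of order $3$ determined up to symbol permutation, yielding at most two classes; computing this square for the $C_6$- and $S_3$-examples shows they occupy the two classes. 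Your approach explains \emph{why} only the two groups appear and avoids the computer search. Two minor remarks: the delicate bookkeeping you flag (residual autotopism freedom after fixing types I and III) can only merge classes, never separate them, so your ``at most two'' is safe regardless; and the final non-isomorphism check is in fact unnecessary for the proposition --- once both Latin-square classes are realized by group examples, every multinet is isomorphic to one of them and hence group-labeled, whether or not the two examples are themselves distinct.
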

\begin{proof}
The claim follows from Theorem \ref{thm:16classes}. 
\end{proof}

\section{Weak projective embeddings of incidence structures}

In this section, $\Sigma=(\mathcal{P},\mathcal{L})$ denotes a simple incidence structure and $\K$ a field. Moreover, we assume that all lines of $\Sigma$ are incident with at least $3$ points. 

\subsection{Weak projective embeddings} 
\begin{definition}
A map $\beta:\mathcal{P} \to \PG(2,\mathbb{L})$ is a \textit{weak projective $\K$-embedding} of $\Sigma$ if the following hold:
\begin{enumerate}[(1)]
\item $\mathbb{L}/\K$ is a field extension.
\item $\beta$ is injective and $\beta(\mathcal{P})$ spans $\PG(2,\mathbb{L})$.
\item For any collinear triple $p_1,p_2,p_3\in \mathcal{P}$, the points $\beta(p_1)$, $\beta(p_2)$, $\beta(p_3)$ are collinear. 
\end{enumerate}
The extension field $\mathbb{L}$ is called the \textit{coordinate field} of $\beta$. 
\end{definition}

%By a \textit{weak embedding} we mean a weak projective $\mathbb{Q}$-embedding of $\Sigma$. 
The definition implies that for a weak projective $\K$-embedding $\beta$, noncollinear points of $\Sigma$ can have collinear images in the projective plane. This is formalized in the following definition. 

\begin{definition}
Let $\beta:\mathcal{P}\to \PG(2,\mathbb{L})$ be a weak projective $\K$-embedding of the incidence structure $\Sigma=(\mathcal{P},\mathcal{L})$. We say that $S\subseteq \mathcal{P}$ is a \textit{merged block of $\beta$,} if $|S|>1$, $S\not\in \mathcal{L}$ and 
\[S=\beta^{-1}(\ell)\]
for some line $\ell$ of $\PG(2,\mathbb{L})$. 
\end{definition}
We remark that $\beta$ is a projective realization of a finite quasigroup if and only if each merged block is contained in one of the component $\beta(\mathcal{P}_i)$, $i=1,2,3$. For example, if $\beta$ is a projective realization of a dihedral group of order $2m$, then the component $\beta(\mathcal{P}_i)$ ($i=1,2,3$) is contained in the union of two lines, cf. \cite[Proposition 22]{knp_3}. In other words, the projective realizations of finite dihedral groups are of \textit{tetrahedron type.}

\subsection{Pre-embeddings and ring homomorphisms}
Let $t_1,\ldots,t_n$ be indeterminates over $\K$, $R=\K[t_1,\ldots,t_n]$. Let 
\[\xi:\mathcal{P}\to \PG(2,\K(t_1,\ldots,t_n))\]
be an injective map. Then for each ${p}\in \mathcal{P}$, there are  polynomials $u_{p},v_{p},w_{p}\in R$ such that $\xi({p})$ can be given by the vector $\mathbf{u}_{p}=[u_{p},v_{p},w_{p}]$. We can assume that $\gcd(u_{p},v_{p},w_{p})=1$ for all ${p}$. 

Let $\mathbb{L}$ be a field extension of $\K$ and $\tau_1,\ldots,\tau_n\in \mathbb{L}$. The substitution $t_i=\tau_i$ determines a map 
\[\xi|_{(t_i)=(\tau_i)}:\mathcal{P}\to \PG(2,\mathbb{L}).\]
This map may be not well-defined. Clearly, any such substitution corresponds to a ring homomorphism $\sigma:R\to \mathbb{L}$ such that $\sigma(t_i)=\tau_i$. In this notation, we write $\xi|_{(t_i)=(\tau_i)}=\bar{\sigma}$. Obviously, $\bar{\sigma}$ is a well-defined injective map if and only if for different points ${p},{q}\in \mathcal{P}$, not all coordinates of $\mathbf{u}_{p} \times \mathbf{u}_{q}$ belong to $\ker\sigma$. Moreover, $\bar{\sigma}(\mathcal{P})$ spans the plane if there are points $p_1,p_2,p_3 \in \mathcal{P}$ such that $\det(\mathbf{u}_{p_1},\mathbf{u}_{p_2},\mathbf{u}_{p_3})\not\in \ker\sigma$. 

\begin{definition}
Let $\mathbb{L}/\K$ be a field extension. The ring homomorphism $\sigma:R\to \mathbb{L}$ is \textit{$\xi$-admissible} if $\bar{\sigma}$ is a well-defined injective map which spans $\PG(2,\mathbb{L})$. The prime ideal $P$ of $R$ is \textit{$\xi$-admissible} if the natural homomorphism from $R$ to the quotient field $Q(R/P)$ of $R/P$ is $\xi$-admissible.
\end{definition}

For the map $\xi$ we define the following ideals:
\begin{align*}
A'_\xi &= \cap _{p,q\in\mathcal{P}, p\neq q} \langle \text{coordinates of }\mathbf{u}_{p} \times \mathbf{u}_{q} \rangle, \\
A''_\xi &= \langle \det(\mathbf{u}_{p_1},\mathbf{u}_{p_2},\mathbf{u}_{p_3}) \mid \text{$p_1,p_2,p_3$ different elements of $\mathcal{P}$} \rangle \\
A_\xi &= A'_\xi \cap A''_\xi
\end{align*}
We call $A_\xi$ the \textit{admissibility ideal} of $\xi$. 

\begin{lemma}
The prime ideal $P\triangleleft R$ is $\xi$-admissible if and only if the admissibility ideal $A_\xi$ is not contained in $P$. 
\end{lemma}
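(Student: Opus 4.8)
The plan is to unwind the definition of $\xi$-admissibility into two ideal-non-containment statements about $P$ and then glue them, using primality of $P$ at each step. First I would pin down the kernel of $\sigma$: for prime $P$ the ring $R/P$ is a domain, so the natural map $\sigma\colon R\to Q(R/P)$ is the quotient map $R\to R/P$ followed by the inclusion $R/P\hookrightarrow Q(R/P)$, whence $\ker\sigma=P$. Thus for $f\in R$, ``$f\in\ker\sigma$'' means ``$f\in P$'', and for an ideal $I\triangleleft R$, ``$I\subseteq\ker\sigma$'' means ``$I\subseteq P$'', i.e.\ every generator of $I$ lies in $P$.

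Next I would translate the two defining conditions of admissibility. By the observation preceding the definition, $\bar\sigma$ is a well-defined injective map if and only if for all distinct $p,q\in\mathcal{P}$ the ideal $I_{p,q}:=\langle\text{coordinates of }\mathbf{u}_p\times\mathbf{u}_q\rangle$ is not contained in $\ker\sigma=P$ --- this also takes care of well-definedness at a single point, since $\sigma(\mathbf{u}_p)=0$ would force every coordinate of $\mathbf{u}_p\times\mathbf{u}_q$ into $P$ for every $q\neq p$. Likewise $\bar\sigma(\mathcal{P})$ spans $\PG(2,\mathbb{L})$ if and only if some determinant $\det(\mathbf{u}_{p_1},\mathbf{u}_{p_2},\mathbf{u}_{p_3})$ avoids $P$, that is, $A''_\xi\not\subseteq P$. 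Hence $P$ is $\xi$-admissible if and only if $I_{p,q}\not\subseteq P$ for all $p\neq q$ and $A''_\xi\not\subseteq P$.

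Now I would collapse the first batch of conditions. Since $\mathcal{P}$ is finite, $A'_\xi=\bigcap_{p\neq q}I_{p,q}$ is a finite intersection, and for a prime ideal $P$ such an intersection is contained in $P$ if and only if one of the $I_{p,q}$ is: the only nontrivial implication follows because $\prod_{p\neq q}I_{p,q}\subseteq\bigcap_{p\neq q}I_{p,q}\subseteq P$, and primality of $P$ then forces some factor $I_{p,q}$ into $P$. Therefore ``$I_{p,q}\not\subseteq P$ for all $p\neq q$'' is equivalent to $A'_\xi\not\subseteq P$, so $P$ is $\xi$-admissible if and only if $A'_\xi\not\subseteq P$ and $A''_\xi\not\subseteq P$. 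It remains to identify this with $A_\xi=A'_\xi\cap A''_\xi\not\subseteq P$. One direction is immediate since $A_\xi\subseteq A'_\xi$ and $A_\xi\subseteq A''_\xi$. For the converse, pick $a'\in A'_\xi\setminus P$ and $a''\in A''_\xi\setminus P$; then $a'a''\in A'_\xi A''_\xi\subseteq A'_\xi\cap A''_\xi=A_\xi$, while $a'a''\notin P$ because $P$ is prime. This finishes the equivalence.

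I do not expect a real obstacle: the proof is a chain of equivalences from the present section. The only points needing care are that primality of $P$ is used three times --- to get $\ker\sigma=P$ (so $R/P$ is a domain), to detect a finite intersection/product of ideals inside a prime, and to keep the product $a'a''$ out of $P$ --- and that the finiteness of $\mathcal{P}$ is what legitimises forming the finite product $\prod_{p\neq q}I_{p,q}$ in the middle step.
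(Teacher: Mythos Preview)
Your proof is correct and follows essentially the same route as the paper's: both arguments reduce $\xi$-admissibility to the pair of conditions $A'_\xi\not\subseteq P$ and $A''_\xi\not\subseteq P$, then use primality of $P$ to combine these into $A_\xi\not\subseteq P$. The paper's proof is extremely terse (two sentences), whereas you spell out the role of primality at each step, handle well-definedness of $\bar\sigma$ at a single point, and make explicit the finiteness assumption on $\mathcal{P}$ needed for the product-in-a-prime trick.
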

\begin{proof}
Since $P$ is prime, $A_\xi\subseteq P$ if and only if $A''_\xi \subseteq P$ or for some $p\neq q\in \mathcal{P}$, all coordinates of $\mathbf{u}_{p} \times \mathbf{u}_{q}$ are in $P$. Hence, for the ring homomorphism $\sigma:R\to Q(R/P)$, either $\bar{\sigma}(\mathcal{P})$ is contained in a line, or $\bar{\sigma}$ is not injective. 
\end{proof}

\begin{definition}
The map $\xi:\mathcal{P}\to \PG(2,\K(t_1,\ldots,t_n))$ is a \textit{pre-embedding of $\Sigma$,} if the following hold:
\begin{enumerate}[(1)]
\item For collinear points $\xi(p_1)$, $\xi(p_2)$, $\xi(p_3)$, the points $p_1,p_2,p_3$ are collinear in $\Sigma$. 
\item For any weak projective $\K$-embedding $\beta$ of $\Sigma$, there is a ring homomorphism $\sigma$ from $R$ to the coordinate field $\mathbb{L}$ of $\beta$ such that $\bar{\sigma}$ is a weak projective $\K$-embedding of $\Sigma$, which is projectively equivalent with $\beta$. 
\end{enumerate}
\end{definition}

It is obvious that $\Sigma$ has pre-embeddings. For example, if each coordinates $u_{p},v_{p},w_{p}$ of $\mathbf{u}_{p}$ are different interminates $t_i$. Moreover, since the projective coordinate frame can be chosen such that the line at infinity contains no point $\beta({p})$, we can set $w_{p}=1$ for all ${p}$, and still have a pre-embedding. For the rest of this section, we fix a pre-embedding $\xi$ of $\Sigma$. 
%In order to describe all weak projective embeddings of $\Sigma$, we have to determine th

\subsection{Minimal associated prime ideals of weak embeddings}
We present a computational method to determine weak embeddings of finite incidence structures. Define the ideal
\[ I_{\Sigma,\xi} = \langle \det(\mathbf{u}_{p_1},\mathbf{u}_{p_2},\mathbf{u}_{p_3}) \mid \text{ $p_1,p_2,p_3$ collinear in $\Sigma$} \rangle \]
of $R$. 
\begin{lemma} \label{lm:xiadm}
The following objects are essentially the same:
\begin{enumerate}[(i)]
\item Weak projective $\K$-embeddings $\beta$ of $\Sigma$ with coordinate field $\mathbb{L}$;
\item $\xi$-admissible ring homomorphisms $\sigma:R\to \mathbb{L}$ with $I_{\Sigma,\xi} \leq \ker \sigma$;
\item $\xi$-admissible prime ideals $P$ containing $I_{\Sigma,\xi}$.
\end{enumerate}
\end{lemma}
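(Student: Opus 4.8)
The plan is to establish the equivalences in Lemma~\ref{lm:xiadm} by chasing through the definitions and showing that each of the three classes of objects produces a member of the next, with the construction being inverse (up to projective equivalence and up to passing from a ring homomorphism to its kernel). I would first record the dictionary: a $\xi$-admissible ring homomorphism $\sigma\colon R\to\mathbb{L}$ yields the map $\bar\sigma\colon\mathcal{P}\to\PG(2,\mathbb{L})$, $p\mapsto[\sigma(u_p),\sigma(v_p),\sigma(w_p)]$, which by the definition of admissibility is well-defined, injective, and spanning; the extra hypothesis $I_{\Sigma,\xi}\le\ker\sigma$ says exactly that $\det(\mathbf{u}_{p_1},\mathbf{u}_{p_2},\mathbf{u}_{p_3})\in\ker\sigma$ for every collinear triple of $\Sigma$, i.e. $\beta(p_1),\beta(p_2),\beta(p_3)$ are collinear in $\PG(2,\mathbb{L})$. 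Combined with property~(2) of pre-embeddings (which guarantees $\bar\sigma$ only creates collinearities, never destroys the required ones — actually property~(1) of pre-embeddings is the relevant direction here, ensuring no spurious relations are forced), this shows $\bar\sigma$ satisfies conditions (1)--(3) of a weak projective $\K$-embedding. So (ii)$\Rightarrow$(i).

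For (i)$\Rightarrow$(ii): given a weak projective $\K$-embedding $\beta$ with coordinate field $\mathbb{L}$, apply property~(2) of the pre-embedding $\xi$ to obtain a ring homomorphism $\sigma\colon R\to\mathbb{L}$ with $\bar\sigma$ a weak projective $\K$-embedding projectively equivalent to $\beta$. Since $\bar\sigma$ is injective and spanning, $\sigma$ is $\xi$-admissible. Since $\bar\sigma$ respects collinear triples of $\Sigma$, every generator $\det(\mathbf{u}_{p_1},\mathbf{u}_{p_2},\mathbf{u}_{p_3})$ of $I_{\Sigma,\xi}$ maps to $0$ under $\sigma$, so $I_{\Sigma,\xi}\le\ker\sigma$. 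This gives the object in (ii), and because (ii)$\Rightarrow$(i) returns $\bar\sigma$, which is projectively equivalent to $\beta$, the two passages are mutually inverse in the appropriate sense (``essentially the same'').

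For the equivalence of (ii) and (iii): given a $\xi$-admissible prime $P\supseteq I_{\Sigma,\xi}$, the natural map $\sigma\colon R\to Q(R/P)$ is $\xi$-admissible by definition of an admissible prime ideal, has kernel $P\supseteq I_{\Sigma,\xi}$, and has coordinate field $\mathbb{L}=Q(R/P)$; conversely, given a $\xi$-admissible $\sigma\colon R\to\mathbb{L}$ with $I_{\Sigma,\xi}\le\ker\sigma$, the kernel $P=\ker\sigma$ is a prime ideal of $R$ (as $\mathbb{L}$ is a field, hence a domain) containing $I_{\Sigma,\xi}$, and $P$ is $\xi$-admissible since $\sigma$ factors through the injection $R/P\hookrightarrow\mathbb{L}$, and a nonzero element of $A_\xi$ mapping to something nonzero in $\mathbb{L}$ must already be nonzero in $R/P\subseteq Q(R/P)$; invoke the preceding lemma (the one characterizing $\xi$-admissible primes by $A_\xi\not\subseteq P$) to make this precise. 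One should note that different homomorphisms with the same kernel — e.g. $R\to Q(R/P)$ followed by various field embeddings $Q(R/P)\hookrightarrow\mathbb{L}$ — all give projectively equivalent weak embeddings, which is why (iii) records only the prime ideal; this is the content of ``essentially the same'' at the level of (ii) versus (iii).

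The main obstacle, and the step I would spend the most care on, is the precise formulation of ``essentially the same'' — i.e.\ pinning down the equivalence relations on each side (projective equivalence of embeddings over extensions of $\mathbb{L}$; post-composition of $\sigma$ with field embeddings; the prime ideal as the common invariant) so that the three passages genuinely descend to mutually inverse bijections between the quotient classes. The ring-theoretic content is routine once property~(2) of pre-embeddings is invoked, but the bookkeeping of coordinate fields (a weak embedding over $\mathbb{L}$ versus $\sigma$ landing in $\mathbb{L}$ versus $P$ living in $R$ with residue field $Q(R/P)\subseteq\mathbb{L}$) is where an imprecise statement could hide a gap, so I would state the correspondences at the level of ``a weak embedding over some extension of $\K$, up to projective equivalence'' and check that this is exactly matched by ``a $\xi$-admissible prime ideal of $R$''.
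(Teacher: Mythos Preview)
Your proposal is correct and follows essentially the same route as the paper: (ii)$\Rightarrow$(i) by taking $\beta=\bar\sigma$, (i)$\Rightarrow$(ii) by invoking property~(2) of the definition of a pre-embedding, and (ii)$\Leftrightarrow$(iii) from the definitions together with the preceding lemma on $\xi$-admissible primes. The paper's proof is extremely terse (three sentences), whereas you unpack each step and also worry explicitly about the equivalence relations hiding behind ``essentially the same''; that extra care is reasonable, though the paper simply leaves the phrase informal.
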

\begin{proof}
We have already seen the equivalence of (ii) and (iii). The implication (ii)$\Rightarrow$(i) is given by $\beta=\bar{\sigma}$. (i)$\Rightarrow$(ii) follows from the fact that $\xi$ is a pre-embedding. 
\end{proof}
Let 
\[I_{\Sigma,\xi}=Q_1\cap \cdots \cap Q_{m_0}\] 
be the primary decomposition of $I_{\Sigma,\xi}$, with associated prime ideals $P_1=\sqrt{Q_1},\ldots,P_{m_0}=\sqrt{Q_{m_0}}$. Choose the indexing such that $P_1,\ldots,P_m$ are the minimal associated prime ideals of $I_{\Sigma,\xi}$, $m\leq m_0$. Then $\sqrt{I_{\Sigma,\xi}}=P_1\cap \cdots \cap P_m$. Notice that the prime ideals $P_1,\ldots,P_m$ correspond to the irreducible components of the variety $\mathbf{v}(I_{\Sigma,\xi})$ over the algebraic closure of $\K$. Any prime ideal $P$ containing $I_{\Sigma,\xi}$ contains a $P_k$ for some $k\in\{1,\ldots,m\}$. 

\begin{proposition} \label{prop:minprimes}
The following are equivalent:
\begin{enumerate}[(i)]
\item $\Sigma$ has a weak projective $\K$-embedding with coordinate field $\bar{\K}$.
\item $\Sigma$ has a weak projective $\K$-embedding with coordinate field $\mathbb{L}$, where $\mathbb{L}$ is a finite extension of $\K$.
\item $\Sigma$ has a weak projective $\K$-embedding.
\item At least one minimal associated prime ideal of $I_{\Sigma,\xi}$ is $\xi$-admissible.
\end{enumerate}
\end{proposition}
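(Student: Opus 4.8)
The plan is to prove the chain of equivalences by going around the cycle $(ii)\Rightarrow(i)\Rightarrow(iii)\Rightarrow(iv)\Rightarrow(ii)$, using Lemma~\ref{lm:xiadm} as the main bridge between the geometric statements and the ring-theoretic ones. The implications $(ii)\Rightarrow(i)$ and $(i)\Rightarrow(iii)$ are trivial: a finite extension of $\K$ embeds into $\bar\K$, and an embedding with coordinate field $\bar\K$ is in particular an embedding. So the real content is $(iii)\Rightarrow(iv)$ and $(iv)\Rightarrow(ii)$.

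For $(iii)\Rightarrow(iv)$, suppose $\Sigma$ has a weak projective $\K$-embedding $\beta$ with some coordinate field $\mathbb{L}$. By Lemma~\ref{lm:xiadm}, $\beta$ corresponds to a $\xi$-admissible prime ideal $P\triangleleft R$ containing $I_{\Sigma,\xi}$ (namely $P=\ker\sigma$ for the associated $\sigma:R\to\mathbb{L}$, after noting that the kernel of a homomorphism to a field is prime). Since $P\supseteq I_{\Sigma,\xi}$, by the remark preceding the proposition, $P$ contains one of the minimal associated primes $P_k$. I would then argue that $P_k$ itself is $\xi$-admissible: by the Lemma characterizing $\xi$-admissibility, $P$ $\xi$-admissible means $A_\xi\not\subseteq P$, and since $P_k\subseteq P$, a fortiori $A_\xi\not\subseteq P_k$; hence $P_k$ is $\xi$-admissible. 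This gives $(iv)$.

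For $(iv)\Rightarrow(ii)$, suppose $P_k$ is a $\xi$-admissible minimal associated prime of $I_{\Sigma,\xi}$. Let $\mathbb{F}=Q(R/P_k)$ be the quotient field, so $\mathbb{F}$ is a finitely generated field extension of $\K$, and the natural map $\sigma_0:R\to\mathbb{F}$ is $\xi$-admissible with $I_{\Sigma,\xi}\subseteq\ker\sigma_0=P_k$. By Lemma~\ref{lm:xiadm}, $\bar\sigma_0$ is a weak projective $\K$-embedding of $\Sigma$ with coordinate field $\mathbb{F}$. The remaining task is to descend from a finitely generated extension $\mathbb{F}$ to a \emph{finite} extension of $\K$. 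Here I would use a specialization argument: choose a transcendence basis and specialize the transcendental parameters to values in $\bar\K$ that avoid the finitely many ``bad'' conditions, namely the nonvanishing of the finitely many generators of the admissibility ideal $A_\xi$ (viewed modulo $P_k$) — this is a nonempty Zariski-open condition on an affine space over $\bar\K$, hence has $\bar\K$-points since $\bar\K$ is infinite. The specialization yields a homomorphism $R\to\mathbb{L}$ for $\mathbb{L}$ finite over $\K$, still killing $I_{\Sigma,\xi}$ and still $\xi$-admissible, and Lemma~\ref{lm:xiadm} again converts this to a weak projective $\K$-embedding with coordinate field $\mathbb{L}$. (Alternatively, one can phrase this via Noether normalization of $R/P_k$ and a maximal ideal of the polynomial subring lying over a point where $A_\xi$ does not vanish.)

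The main obstacle is the specialization step in $(iv)\Rightarrow(ii)$: one must be careful that specializing the transcendental part of $\mathbb{F}$ does not accidentally make $\bar\sigma$ ill-defined, non-injective, or degenerate (all points on one line). The point is that each of these failure modes is governed by the vanishing of one of the finitely many generators of $A_\xi$ modulo $P_k$, and not all of these generators vanish identically on $\mathbf{v}(P_k)$ precisely because $P_k$ is $\xi$-admissible, i.e.\ $A_\xi\not\subseteq P_k$; so the locus where some generator is nonzero is a nonempty open subset of an irreducible variety and therefore meets any dense set of $\bar\K$-points. Once this is set up correctly, everything else is bookkeeping with Lemma~\ref{lm:xiadm}.
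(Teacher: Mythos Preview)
Your argument is correct and uses the same ingredients as the paper's proof, but you traverse the cycle in a different order: the paper proves $(i)\Rightarrow(ii)\Rightarrow(iii)\Rightarrow(iv)\Rightarrow(i)$, whereas you do $(ii)\Rightarrow(i)\Rightarrow(iii)\Rightarrow(iv)\Rightarrow(ii)$. Your $(iii)\Rightarrow(iv)$ is identical to the paper's. The real difference is in closing the loop. The paper handles $(iv)\Rightarrow(i)$ very concretely: since $A_\xi\not\subseteq P_k$, some minimal associated prime $P'$ of $A_\xi$ satisfies $P'\not\subseteq P_k$, hence $\mathbf{v}(P_k)\not\subseteq\mathbf{v}(P')$ over $\bar\K$, and any point $(\tau_1,\dots,\tau_n)\in\mathbf{v}(P_k)\setminus\mathbf{v}(P')$ gives the desired $\sigma:R\to\bar\K$; afterwards $(i)\Rightarrow(ii)$ is the one-line remark that the finitely many coordinate values generate a finite extension. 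Your $(iv)\Rightarrow(ii)$ packages both of these into a single specialization step from $Q(R/P_k)$ down to a finite extension. This is fine, but the paper's variety-theoretic phrasing avoids having to discuss transcendence bases or Noether normalization and makes the ``avoid the bad locus'' step completely explicit.
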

\begin{proof}
(i)$\Rightarrow$(ii): Let $\beta$ be a weak projective $\K$-embedding of $\Sigma$, with coordinate field $\bar{\K}$. Let $X$ be the set of coordinate values of $\beta(p)$ for all $p\in \mathcal{P}$. Then $X$ is a finite subset of $\bar{\K}$, hence the field extension $\mathbb{L}$ of $\K$ which is generated by $X$ is finite. Since the image of $\beta$ is in $\PG(2,\mathbb{L})$, we get a weak projective $\K$-embedding with coordinate field $\mathbb{L}$. 

(ii)$\Rightarrow$(iii) is trivial. (iii)$\Rightarrow$(iv): Let $\beta$ be a weak projective $\K$-embedding of $\Sigma$. By Lemma \ref{lm:xiadm}, we have a ring homomorphism $\sigma:R\to \mathbb{L}$ such that $\beta$ is projectively equivalent with $\bar{\sigma}$. The prime ideal $\ker(\sigma)$ contains a minimal associated prime $P_k$. Since $A_\xi \nleq \ker(\sigma)$, $A_\xi \nleq P_k$ and $P_k$ is $\xi$-admissible. 

(iv)$\Rightarrow$(i): Let $P_k$ be a $\xi$-admissible associated prime of $I_{\Sigma,\xi}$. Since $A_\xi \nleq P_k$, the admissibility ideal $A_\xi$ has a minimal associated prime $P'$ such that $P'\nleq P_k$. This means for the affine varieties over $\bar{\K}$ that $\mathbf{v}(P_k) \nsubseteq \mathbf{v}(P')$. Take an element $(\tau_1,\ldots,\tau_n) \in \mathbf{v}(P_k) \setminus \mathbf{v}(P')$ and define the ring homomorphism $\sigma:R\to \bar{\K}$ by $t_i\mapsto \tau_i$. Then $\bar{\sigma}$ is a weak projective $\K$-embedding of $\Sigma$ with coordinate field $\bar{\K}$. 
\end{proof}

\begin{remark}
\begin{enumerate}[(i)]
\item Let $P$ be a prime ideal of $R$. If a reduced Groebner basis of $P$ is given then the $\xi$-admissibility of $P$ can be decided efficiently using reduction modulo the Groebner basis.
\item There are several mathematical softwares which implement the primary decomposition of ideals of polynomial rings, cf. \cite{Sing_primdec,Macaulay2}. The commands can be accessed using the SageMath \cite{Sage8.2} interface. For the mathematical background and implementation of primary decompositions of polynomial ideal see \cite[Chapter 4]{Singular_book}. 
\item The explicit computation of the admissibility ideal $A_\xi$ is not advisable. It is more efficient to check that neither the ideals generated by the coordinates of $\mathbf{u}_{p} \times \mathbf{u}_{q}$, nor $A''_\xi$ are contained in $P$. 
\end{enumerate}
\end{remark}

\subsection{Example}
We finish this section with an example. Let $\mathcal{P}=\{1,\ldots,9\}$, \[\mathcal{L}= \{\{1,4,7\}, \{1,5,8\}, \{1,6,9\}, \{2,4,8\}, \{2,5,9\}, \{2,6,7\}, \{3,4,9\}, \{3,5,7\}, \{3,6,8\}\}.\] 
Then $\Sigma=(\mathcal{P},\mathcal{L})$ is the dual $3$-net of the cyclic group of order $3$. Define the projective points
\[\begin{array}{lll}
\mathbf{u}_1=[1,0,0], & \mathbf{u}_2=[t_{1},t_{2},t_{13}], & \mathbf{u}_3=[t_{3},t_{4},t_{13}], \\
\mathbf{u}_4=[0,1,0], & \mathbf{u}_5=[t_{5},t_{6},t_{13}], & \mathbf{u}_6=[t_{7},t_{8},t_{13}], \\
\mathbf{u}_7=[1,1,0], & \mathbf{u}_8=[t_{9},t_{10},t_{13}], & \mathbf{u}_9=[t_{11},t_{12},t_{13}]\end{array}\]
with indeterminates $t_1,\ldots,t_{13}$. The map $\xi:i\to \mathbf{u}_i$ is a pre-embedding of $\Sigma$. Indeed, if the last coordinate of any of $\mathbf{u}_2,\mathbf{u}_3, \mathbf{u}_5,\mathbf{u}_6, \mathbf{u}_8,\mathbf{u}_9$ is $0$, then each $\mathbf{u}_i$ is contained in the line $W=0$ by Lemma \ref{lm:lengthdef}. If this is not the case then the last coordinates of $\mathbf{u}_2,\ldots$ can be chosen to be $1$. The ideal $I_{\Sigma,\xi}$ has associated minimal prime ideals
\begin{align*}
P_1&=\langle t_{13} \rangle,\\
P_2&=\langle t_{8}-t_{12}, t_{6}-t_{10}, t_{4}+t_{5}-t_{10}-t_{11}, t_{3}-t_{11}, t_{2}+t_{7}-t_{9}-t_{12}, t_{1}-t_{9}, \\ &t_{5}t_{7}-t_{5}t_{9}-t_{7}t_{11}+t_{9}t_{10}+t_{9}t_{11}-t_{9}t_{12}-t_{10}t_{11}+t_{11}t_{12} \rangle.
\end{align*}
While the ideal $P_2$ is $\xi$-admissible, $P_1$ is not, since all points of the corresponding weak embedding $\mathcal{P}\to \PG(2,Q(R/P_1))$ are contained in a line.

\section{Weak embeddings of multinets of order 6}

In this section, $\mathcal{M}=M_i$, $i\in \{1,\ldots,16\}$, is an abstract light dual multinet of order $6$ with a unique superline of length $2$ or $3$. We assume that $\mathcal{M}$ is well-indexed w.r.t. $\ell$. Let $\K$ be a field and define the polynomial ring
\[R=\K[t_1,\ldots,t_{17}]. \]
\begin{lemma} \label{lm:xi_for_M}
The set
\[\begin{array}{lll}
\mathbf{p}_{1}=[1,0,0] & \mathbf{p}_{7}=[0,1,0] & \mathbf{p}_{13}=[1,1,0] \\
\mathbf{p}_{2}=[1,t_1,0] & \mathbf{p}_{8}=[1,t_2,0] & \mathbf{p}_{14}=[1,t_3,0] \\
\mathbf{p}_{3}=[t_{4},t_{5},1] & \mathbf{p}_{9}=[1,0,1] & \mathbf{p}_{15}=[0,0,1] \\
\mathbf{p}_{4}=[t_{6},t_{7},1] & \mathbf{p}_{10}=[t_{12},t_{13},1] & \mathbf{p}_{16}=[t_{6},t_{13},1] \\
\mathbf{p}_{5}=[t_{8},t_{9},1] & \mathbf{p}_{11}=[t_{14},t_{15},1] & \mathbf{p}_{17}=[t_{8},t_{15},1] \\
\mathbf{p}_{6}=[t_{10},t_{11},1] & \mathbf{p}_{12}=[t_{16},t_{17},1] & \mathbf{p}_{18}=[t_{10},t_{17},1] \\
\end{array}\]
of points gives a pre-embedding of $\mathcal{M}=M_i$, $i\in \{3,\ldots,16\}$.
\end{lemma}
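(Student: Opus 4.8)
The plan is to verify the two conditions defining a pre-embedding for the map $\xi: i\mapsto\mathbf{p}_i$. Condition~(1) requires that every collinearity of the $\mathbf{p}_i$ over $\K(t_1,\dots,t_{17})$ already be a collinearity of $\mathcal{M}$; equivalently, that $\det(\mathbf{p}_i,\mathbf{p}_j,\mathbf{p}_k)$ be a nonzero polynomial for every triple $\{i,j,k\}$ that is not collinear in $\mathcal{M}$. From the shape of the coordinate vectors one checks, by a finite computation, that the only $3\times3$ determinants vanishing identically belong to triples contained in the superline $\ell'=\{1,2,7,8,13,14\}$ (all of whose points lie on $W=0$) or to triples of the form $\{1,6+j,12+j\}$ and $\{j,7,12+j\}$, in which two coordinates coincide by construction; all of these are collinear in $\mathcal{M}$ by (WM2) and (WM3). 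So condition~(1) holds, and the substance is condition~(2).

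Let $\beta:\mathcal{P}\to\PG(2,\mathbb{L})$ be a weak projective $\K$-embedding; I must produce a ring homomorphism $\sigma:R\to\mathbb{L}$ with $\bar\sigma$ projectively equivalent to $\beta$. The first step is to normalize $\beta$ by a projectivity. Any three points of $\ell'$ are collinear in $\mathcal{M}$, so $\beta(\ell')$ lies on a single line $m$ of $\PG(2,\mathbb{L})$. Put $T=\beta^{-1}(m)$ and write $T=\alpha_1(T_1)\cup\alpha_2(T_2)\cup\alpha_3(T_3)$. In an abstract light dual multinet any two points lying in distinct components are joined by a block, and the image of such a block lies on $m$ as soon as two of its three points do; hence $(T_1,T_2,T_3)$ is a subsquare of $Q$, and since $\ell'\subseteq T$ it contains the order-$2$ subsquare that $\ell'$ cuts out on $Q$. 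By Lemma~\ref{lm:subsquares} a subsquare of order $3$ cannot contain one of order $2$, so $(T_1,T_2,T_3)$ has order $2$ or $6$; order $6$ would force $T=\mathcal{P}$, contradicting that $\beta(\mathcal{P})$ spans the plane. Hence $T=\ell'$. In particular $\beta(9),\beta(15)\notin m$, while $\{1,9,15\}$ is collinear in $\mathcal{M}$ by (WM3); so there is a unique $g\in\mathrm{PGL}(3,\mathbb{L})$ with $g(\beta(i))=\mathbf{p}_i$ for $i\in\{1,7,13,15,9\}$, and after replacing $\beta$ by $g\circ\beta$ we may assume these equalities, so that $m=\{W=0\}$.

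Now the coordinates of the remaining points fall into place. Since $\beta(2),\beta(8),\beta(14)$ lie on $m$ and differ from $\beta(7)=[0,1,0]$, they have the form $[1,\ast,0]$, fixing $\sigma(t_1),\sigma(t_2),\sigma(t_3)$. Since $T=\ell'$, each of $\beta(3),\dots,\beta(6),\beta(10),\beta(11),\beta(12)$ has nonzero last coordinate, hence the form $[\ast,\ast,1]$, fixing $\sigma(t_4),\dots,\sigma(t_{17})$; in addition the collinearity $\{3,7,15\}$ of (WM3) puts $\beta(3)$ on $\beta(7)\beta(15)=\{X=0\}$, so $\sigma(t_4)=0$, in accordance with $\mathbf{p}_3=[t_4,t_5,1]$. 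Finally, for $j\in\{4,5,6\}$ the collinearities $\{1,6+j,12+j\}$ and $\{j,7,12+j\}$ of (WM3) place $\beta(12+j)$ on the lines $\beta(1)\beta(6+j)$ and $\beta(7)\beta(j)$, whose intersection is the point with first coordinate that of $\beta(j)$, second coordinate that of $\beta(6+j)$, and third coordinate $1$; by the choice of $\sigma$ this is exactly $\bar\sigma(12+j)$, and the case $j=3$ likewise gives $\beta(15)=[0,0,1]=\bar\sigma(15)$. Thus $\beta=\bar\sigma$, and $\sigma$ is $\xi$-admissible because $\bar\sigma=\beta$ is a well-defined injective map spanning the plane. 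Therefore $\xi$ is a pre-embedding of $\mathcal{M}=M_i$.

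The one genuinely delicate step is the exclusion, in the second paragraph, of a point outside $\ell'$ being absorbed onto the line $m=\beta(\ell')$: this is precisely what makes it legitimate to place the coordinate vectors $\mathbf{p}_9$ and $\mathbf{p}_{15}$ off $W=0$, and without it the whole normalization recipe would collapse. It is settled by the observation that any such absorbed set forms a subsquare of $Q$ strictly containing the order-$2$ superline subsquare, which Lemma~\ref{lm:subsquares} forbids unless the set is all of $\mathcal{P}$ — and that case contradicts the spanning requirement on a weak embedding. Everything else — the finite determinant check for condition~(1) and the extraction of the scalars $\sigma(t_i)$ — is routine.
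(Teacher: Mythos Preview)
Your proof is correct and follows the same approach as the paper's: map the superline to $W=0$, use the subsquare structure to show no other $\beta$-image can land on this line, normalize five points via a projectivity, and read off the remaining coordinates from the well-indexing relations (WM3). You are in fact more careful than the paper, which omits the verification of condition~(1) entirely and cites only Lemma~\ref{lm:lengthdef} for the key exclusion step where you correctly also invoke Lemma~\ref{lm:subsquares} to rule out an order-$3$ subsquare.
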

\begin{proof}
The points $\mathbf{p}_{1}, \mathbf{p}_{2}, \mathbf{p}_{7}, \mathbf{p}_{8}, \mathbf{p}_{13}, \mathbf{p}_{14}$ are on the line at infinity $W=0$. If any other point lies on this line, then all do so by Lemma \ref{lm:lengthdef} and the image of $\xi$ is contained in a line. Hence, we may write $1$ for the last coordinate of these points. For the same reason, we can choose the system of projective coordinates such that $\mathbf{p}_{1}, \mathbf{p}_{7}, \mathbf{p}_{9}, \mathbf{p}_{13}, \mathbf{p}_{15}$ have the given form. The coordintes of $\mathbf{p}_{3}, \ldots, \mathbf{p}_{6}, \mathbf{p}_{10}, \ldots, \mathbf{p}_{12}$ are generic. Finally the coordinates of $\mathbf{p}_{16}, \mathbf{p}_{17}$ and $\mathbf{p}_{18}$ have these forms since $\mathcal{M}$ is well-indexed. 
\end{proof}

\begin{table} 
\caption{Minimal associated prime ideals of $I_{\mathcal{M},\xi}$\label{table:primdec}}
\begin{tabular}{|c|c|c|c|}
\hline
$\mathcal{M}$& nr of $P_k$'s & nr of $\mathcal{M}$-admissible $P_k$'s & $\dim$ of $\mathcal{M}$-admissible $P_k$\\ \hline
$M_{3}$ & 6 & 1 & 2 \\
$M_{4}$ & 3 & 1 & 2 \\
$M_{5}$ & 5 & 0 & - \\ 
$M_{6}$ & 1 & 0 & - \\
$M_{7}$ & 5 & 0 & - \\
$M_{8}$ & 1 & 1 & 1 \\
$M_{9}$ & 2 & 1 & 1 \\
$M_{10}$ & 4 & 1 & 1 \\
$M_{11}$ & 2 & 0 & - \\
$M_{12}$ & 3 & 1 & 2 \\
$M_{13}$ & 2 & 1 & 1 \\
$M_{14}$ & 4 & 1 & 1 \\
$M_{15}$ & 1 & 1 & 1 \\
$M_{16}$ & 1 & 1 & 1 \\
\hline 
\end{tabular}
\end{table}

For $\mathcal{M}=M_i$, $i\in \{3,\ldots,16\}$ and for $\K=\mathbb{Q}$, the associated minimal prime ideals of $I_{\mathcal{M},\xi}$ can be explicitly computed using \cite{GAP_Sing,Singular,Sing_primdec,Macaulay2} and the SageMath \cite{Sage8.2} interface. We summarize the computational results in the following lemma.

\begin{lemma} \label{lm:table2}
Let $\xi$ be the pre-embedding of Lemma \ref{lm:xi_for_M}. The number of associated prime ideals and the number of $\xi$-admissible associated prime ideals of $I_{M_i,\xi}$, $i\in \{3,\ldots,16\}$, are given in Table \ref{table:primdec}. In particular, $I_{M_i,\xi}$ has at most one $\xi$-admissible associated prime ideal. The Krull dimension of the $\xi$-admissible associated prime ideal is given in the last column of the Table \ref{table:primdec}.
\end{lemma}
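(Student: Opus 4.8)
The proof is entirely computational, and the plan is to execute, for each $i\in\{3,\dots,16\}$, the pipeline made available by Lemma \ref{lm:xiadm} and Proposition \ref{prop:minprimes}: assemble the ideal $I_{M_i,\xi}$, compute its minimal associated primes, and decide $\xi$-admissibility of each of them. First I would take a fixed well-indexed model of $M_i$ with respect to its (unique) superline of length $2$. Its set of collinear triples is produced together with the isomorphism classification of Theorem \ref{thm:16classes}, using \cite{GAP4} and the package \cite{DESIGN} through the \cite{Sage8.2} interface; note that the superline $B'$ consists of the six points corresponding to $\mathbf p_{1},\mathbf p_{2},\mathbf p_{7},\mathbf p_{8},\mathbf p_{13},\mathbf p_{14}$, which already lie on the line $W=0$ in the pre-embedding $\xi$ of Lemma \ref{lm:xi_for_M}, so only the length-$1$ blocks contribute nonzero determinants. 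Substituting the coordinate vectors $\mathbf p_{j}$ of Lemma \ref{lm:xi_for_M} into $\det(\mathbf p_{a},\mathbf p_{b},\mathbf p_{c})$ over all collinear triples $a,b,c$ of $M_i$ yields the explicit ideal $I_{M_i,\xi}\triangleleft R=\mathbb Q[t_1,\dots,t_{17}]$.

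Next I would run a primary decomposition of $I_{M_i,\xi}$ with the algorithms of \cite{Sing_primdec} (or, as a cross-check, \cite{Macaulay2}), extract the minimal associated primes $P_1,\dots,P_m$, and record $m$ in the second column of Table \ref{table:primdec}. For each $P_k$ I would compute a reduced Gröbner basis and test $\xi$-admissibility exactly as described in the Remark following Proposition \ref{prop:minprimes}: reduce modulo this basis each coordinate of every cross product $\mathbf p_{a}\times\mathbf p_{b}$ ($a\neq b$) to verify that $\bar\sigma$ remains injective on $\mathcal P$, and reduce the generators of $A''_\xi$ to verify that the image spans $\PG(2,\overline{\mathbb Q})$; equivalently one checks $A_\xi\nleq P_k$. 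This fills the third column. The fourth column is then obtained by reading the Krull dimension of the (at most one) $\xi$-admissible prime off its Gröbner basis. The assertion that $I_{M_i,\xi}$ has at most one $\xi$-admissible associated prime is not argued separately: it is the tabulated outcome of these computations, case by case.

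The main obstacle is the cost and the certification of the primary decompositions. With up to $17$ indeterminates over $\mathbb Q$, the Gröbner basis computations underlying \cite{Sing_primdec} can be heavy, and one must be confident the returned decomposition is complete and that each component is genuinely prime. I would mitigate this by choosing block orderings that reflect which $t_i$ occur linearly in $I_{M_i,\xi}$ (eliminating those variables first), by splitting $I_{M_i,\xi}$ along obvious factorizations before decomposing, and by independently verifying each output: checking that $\bigcap_k Q_k=I_{M_i,\xi}$, that each $\sqrt{Q_k}$ is prime (e.g.\ via its Gröbner basis and the quotient-ring structure), and cross-checking the number and dimensions of the components after reduction modulo several rational primes $p$ not dividing any leading coefficient. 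By contrast, once a Gröbner basis of each $P_k$ is available, the admissibility test and the dimension computation are routine.

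Finally, the restriction to $i\in\{3,\dots,16\}$ is intrinsic: by Table \ref{table:ldms_classification} these are precisely the classes whose superline has length $2$, while $M_1,M_2$ (length $3$) fall under Proposition \ref{pr:superlinr3} and are handled through the known realizations of the cyclic and dihedral groups of order $6$, so they need not be treated by the present computation. The codes realizing all of the above are collected in the appendices.
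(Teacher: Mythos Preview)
Your proposal is correct and follows essentially the same approach as the paper: the lemma is stated as a summary of computational results, and the paper's argument consists precisely of constructing $I_{M_i,\xi}$ from the well-indexed model and the pre-embedding $\xi$, computing its minimal associated primes with \textsc{Singular}'s \texttt{primdec.lib} via the SageMath interface, testing $\xi$-admissibility of each prime by reducing the cross products $\mathbf p_a\times\mathbf p_b$ modulo a Gr\"obner basis, and reading off the Krull dimensions---exactly as in the code of Appendix~C. Your additional verification steps (cross-checking with \textsc{Macaulay2}, modular reductions, confirming primality of the components) go beyond what the paper explicitly records but are entirely in the spirit of certifying a computer-assisted proof.
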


We state our main result now.

\begin{theorem} \label{thm:main}
\begin{enumerate}[(i)]
\item Any weak projective $\mathbb{K}$-embedding of $M_1$ and $M_2$ is either conic-line or tetrahedron type. 
\item The abstract light dual multinets $M_3$, $M_4$, $M_8$, $M_9$, $M_{10}$, $M_{12}$, $M_{13}$, $M_{14}$, $M_{15}$, $M_{16}$ have a weak projective embedding in $\PG(2,\mathbb{C})$. 
\item The abstract light dual multinets $M_5$, $M_6$, $M_7$, $M_{11}$ have no weak projective embeddings in $\PG(2,\mathbb{C})$. 
\end{enumerate}
\end{theorem}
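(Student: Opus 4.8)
Parts (ii) and (iii) are immediate from the apparatus of Section~5 once the computational input recorded in Table~\ref{table:primdec} is granted. Throughout fix $\K=\mathbb{Q}$ and let $\xi$ be the pre-embedding of Lemma~\ref{lm:xi_for_M}. For each $i\in\{3,4,8,9,10,12,13,14,15,16\}$ Table~\ref{table:primdec} (via Lemma~\ref{lm:table2}) records that $I_{M_i,\xi}$ has exactly one $\xi$-admissible minimal associated prime ideal; by the implication (iv)$\Rightarrow$(i) of Proposition~\ref{prop:minprimes}, $M_i$ then has a weak projective $\mathbb{Q}$-embedding with coordinate field $\overline{\mathbb{Q}}$, and post-composing with a field embedding $\overline{\mathbb{Q}}\hookrightarrow\mathbb{C}$ yields a weak projective embedding into $\PG(2,\mathbb{C})$. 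This proves (ii). For $i\in\{5,6,7,11\}$ the table records that no minimal associated prime of $I_{M_i,\xi}$ is $\xi$-admissible, so Proposition~\ref{prop:minprimes} shows $M_i$ has no weak projective $\mathbb{Q}$-embedding at all. Since any weak projective $\mathbb{C}$-embedding is a fortiori a weak projective $\mathbb{Q}$-embedding — the ground field enters the definition only through the demand that the coordinate field extend it, not through injectivity, spanning, or collinearity of triples — this gives (iii).

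For (i) the first step is to pass to the group-labelled picture. By Proposition~\ref{pr:superlinr3}, $M_1$ and $M_2$ can be labelled by the cyclic group $C_6$ or by the dihedral group $D_6\cong S_3$; fix such a labelling, let $\mathcal{N}$ be the associated dual $3$-net, and let $(S_1,S_2,S_3)$ be the order-$3$ subsquare — a subgroup $C_3$ in a suitable isotope, cf. Lemma~\ref{lm:wellindexing2} — whose superline $B'$ is the unique long block of $\mathcal{M}=M_i$. Every line $\{\alpha_1(x),\alpha_2(y),\alpha_3(x\cdot y)\}$ of $\mathcal{N}$ is a collinear triple of $\mathcal{M}$: if $\alpha_1(x)$ and $\alpha_2(y)$ both lie on $B'$ then so does $\alpha_3(x\cdot y)$ by the subsquare relations, and the triple is contained in $B'$; otherwise the line is one of the length-$1$ blocks of $\mathcal{M}$. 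Hence any weak projective $\K$-embedding $\beta$ of $\mathcal{M}$ is at the same time a weak projective $\K$-embedding of $\mathcal{N}$, and moreover it sends the nine points of $B'$ onto a single line $\ell_0$.

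It then remains to show that such a $\beta$ can only produce one of the two classical algebraic configurations. For the existence half I would display the two standard families: the conic-line type realization of $C_6=C_2\times C_3$, in which the factor $C_3$ — that is, the subgroup defining $B'$ — lies on the line component of a degenerate cubic while the complementary coset lies on the conic, so that $B'$ collapses exactly onto that line; and the tetrahedron type realization of $D_6$, in which $B'$ is one of the six lines of a complete quadrilateral. For the converse half — which is what the statement asserts — I would combine the classification of group realizations (Theorem~\ref{mainteo}, forcing an algebraic, hence conic-line, picture for $C_6$ and a tetrahedron picture for $D_6$) with a direct pencil analysis ruling out spurious collinearities: fixing $\beta(\alpha_2(y))$ with $y$ outside the subsquare, the three length-$1$ blocks $\{\alpha_1(x),\alpha_2(y),\alpha_3(x\cdot y)\}$ with $x$ in the subsquare give three concurrent lines meeting $\ell_0$ in the three points of $\beta(\alpha_1(S_1))$ and meeting the three points of $\beta(\alpha_3(Q\setminus S_3))$; running this over all such $y$ and over the two symmetric situations pins down the mutual position of the nine points off $\ell_0$ and forces them onto a conic (for the $C_6$ labelling) or onto a union of two further lines (for the $D_6$ labelling). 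An alternative, entirely computational route mirrors Section~6: one runs the algorithm with the pre-embedding of Lemma~\ref{lm:xi_for_M} modified so that the nine points of $B'$ are specialized to the line at infinity, finds that $I_{M_i,\xi}$ then has a single $\xi$-admissible minimal associated prime, of Krull dimension $2$, and finally identifies the generic point of the corresponding variety with the conic-line configuration for one labelling and with the tetrahedron configuration for the other.

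The step I expect to be the main obstacle is precisely this last identification. In (ii) and (iii) the dichotomy is read off a single admissibility check, whereas in (i) one must not merely locate the $\xi$-admissible component but also certify which projective configuration it parametrizes — equivalently, exclude the possibility that a weak embedding of $\mathcal{N}$ with $B'$ collapsed carries an unexpected merged block and thereby escapes the conic-line/tetrahedron dichotomy. The $D_6$ case is the delicate one, because there each component $\beta(\mathcal{P}_i)$ already lies on a union of two lines, so the collinearities built into the tetrahedron configuration must be carefully separated from genuinely new ones.
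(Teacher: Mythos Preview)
Your treatment of (ii) and (iii) is correct and matches the paper's own proof, which simply cites Proposition~\ref{prop:minprimes} and Lemma~\ref{lm:table2}.

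For (i), the paper's proof is a one-line citation: Proposition~\ref{pr:superlinr3} reduces to the group-labelled case, and then \cite[Proposition~4.3]{KN_multinets} is invoked to finish. That external result is precisely a structure theorem for group-labelled light dual multinets with a superline of length $n/2$, asserting that any projective realization is of conic-line or tetrahedron type. You instead attempt to reprove this from inside the paper, via the dual $3$-net classification Theorem~\ref{mainteo} supplemented by a pencil analysis or a parallel computation. This is a genuinely different route, and the obstacle you flag is real and not closed by your sketch: Theorem~\ref{mainteo} classifies \emph{realizations}, not weak embeddings, so before you may apply it you must already know there are no merged blocks intersecting two components---which is essentially the content of the result you are trying to prove. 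Moreover, even once Theorem~\ref{mainteo} applies, for the $C_6$ labelling it yields only ``algebraic'', and you still have to argue that the superline constraint forces the cubic to be a conic plus the line $\ell_0$ rather than an irreducible cubic or a triangle (your sketch asserts this but does not argue it). The pencil argument you outline is plausible but far from complete; the alternative computational route you mention would work in principle, but the paper does not take it and the pre-embedding of Lemma~\ref{lm:xi_for_M} is tailored to superlines of length $2$, not $3$, so a new pre-embedding would be needed. In short: for (i) the paper outsources the hard step to \cite{KN_multinets}, and your attempt to fill it in is honest about its gap but does not close it.
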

\begin{proof}
(i) follows from Proposition \ref{pr:superlinr3} and \cite[Proposition 4.3]{KN_multinets}. 
Proposition \ref{prop:minprimes} and Lemma \ref{lm:table2} imply (ii) and (iii). 
\end{proof}

\section{Merged blocks of weak embeddings}

In this section, we compute the generic merged blocks of the weak projective $\mathbb{Q}$-embeddings of the abstract light dual multinets $M_3$, $M_4$, $M_8$, $M_9$, $M_{10}$, $M_{12}$, $M_{13}$, $M_{14}$, $M_{15}$, $M_{16}$. 

If $\beta$ is given by a pre-embedding $\xi$ in $\PG(2,\mathbb{Q}(t_1,\ldots,t_n))$ and a prime ideal $P$ of $R=\mathbb{Q}[t_1,\ldots,t_n]$, containing $I_{\Sigma,\xi}$, then we can speak of the merged blocks of $P$. Indeed in this case, the coordinate field $\mathbb{L}$ is the quotient field of $R/P$. The merged blocks can be determined by the triples $p_1,p_2,p_3\in \mathcal{P}$ for which 
\[\det(\mathbf{u}_{p_1},\mathbf{u}_{p_2},\mathbf{u}_{p_3}) \in P.\]
This enables us to compute the merged blocks of the weak embeddings of light dual multinets of order $6$ with a unique superline of length $2$. 

\begin{lemma}
If $\Sigma$ is an abstract light dual multinet, $\beta:\mathcal{P}\to \PG(2,\mathbb{L})$ is a weak projective embedding of $\Sigma$, then any merged block $B\subseteq \mathcal{P}$ is either contained in one of the components $\mathcal{P}_1$, $\mathcal{P}_2$ or $\mathcal{P}_3$, or, it is a new line of length at least $2$ in the sense of Lemma \ref{lm:lengthdef}. 
\end{lemma}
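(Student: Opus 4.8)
The plan is to unwind the definitions. Let $B\subseteq\mathcal{P}$ be a merged block of $\beta$, so $|B|>1$, $B\notin\mathcal{M}$, and $B=\beta^{-1}(\ell)$ for some line $\ell$ of $\PG(2,\mathbb{L})$. Write $B_i=B\cap\mathcal{P}_i$ and $S_i=\alpha_i^{-1}(B_i)\subseteq Q$ for $i=1,2,3$. The dichotomy in the statement is: either $B$ meets only one component (i.e.\ $B=B_i$ for some $i$), or $B$ meets at least two components; in the latter case I claim $(S_1,S_2,S_3)$ is a subsquare of $Q$ of order at least $2$, which is exactly the assertion that $B$ is a ``new line of length at least $2$ in the sense of Lemma~\ref{lm:lengthdef}''. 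So everything reduces to showing: \emph{if $B$ meets two components, then it meets all three, and the associated triple of subsets is a subsquare.}

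The key step is to reproduce the argument of the discussion preceding Lemma~\ref{lm:lengthdef} about the ``length'' of a line, but now for the merged line $\ell$ in $\PG(2,\mathbb{L})$ rather than for a block of $\mathcal{M}$. Suppose, say, that $B$ meets both $\mathcal{P}_1$ and $\mathcal{P}_2$; pick $x\in S_1$, $y\in S_2$. In $\Sigma$ the points $\alpha_1(x),\alpha_2(y),\alpha_3(x\cdot y)$ are collinear (they lie on a block of $\mathcal{M}$), hence by property~(3) of a weak projective $\K$-embedding their images $\beta\alpha_1(x),\beta\alpha_2(y),\beta\alpha_3(x\cdot y)$ are collinear in $\PG(2,\mathbb{L})$. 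Now $\beta\alpha_1(x)$ and $\beta\alpha_2(y)$ both lie on $\ell$ and are distinct (as $\beta$ is injective and the components are disjoint), so $\ell$ is \emph{the} line through them; therefore $\beta\alpha_3(x\cdot y)\in\ell$, i.e.\ $\alpha_3(x\cdot y)\in\beta^{-1}(\ell)=B$, so $x\cdot y\in S_3$. This shows $S_1\cdot S_2\subseteq S_3$, and in particular $B$ meets $\mathcal{P}_3$ as soon as both $S_1,S_2$ are nonempty. The remaining containments $S_1\setminus S_3\subseteq S_2$ and $S_3/S_2\subseteq S_1$ follow by the same collinearity-plus-uniqueness argument, reading the quasigroup identities in the two other forms (for $S_1\setminus S_3\subseteq S_2$: take $x\in S_1$, $z\in S_3$, let $y=x\setminus z$; the triple $\alpha_1(x),\alpha_2(y),\alpha_3(z)$ is a block of $\mathcal{M}$, its image is collinear, and $\beta\alpha_1(x),\beta\alpha_3(z)\in\ell$ are distinct, forcing $\beta\alpha_2(y)\in\ell$, hence $y\in S_2$; similarly for the third). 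Thus $(S_1,S_2,S_3)$ is a subsquare, and since the subsquare cannot have order $1$ — a subsquare of order $1$ corresponds to a single block of $\mathcal{M}$ through the three points, contradicting $B\notin\mathcal{M}$ once $|B|>1$ — it has order at least $2$, which is the claimed conclusion.

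I still need to handle the symmetric cases: I assumed $B$ meets $\mathcal{P}_1$ and $\mathcal{P}_2$, but if instead $B$ meets $\mathcal{P}_1,\mathcal{P}_3$ or $\mathcal{P}_2,\mathcal{P}_3$, the very same ``collinear triple of $\mathcal{M}$ maps into $\ell$'' argument, applied to the two relevant quasigroup identities, shows $B$ meets the third component and yields the subsquare conditions; this is routine once the first case is written out. The one point that needs a word of care is the step ``$\beta\alpha_i(x)$ and $\beta\alpha_j(y)$ are distinct points of $\ell$'': this uses injectivity of $\beta$ together with the fact that $\mathcal{P}_i$ and $\mathcal{P}_j$ are disjoint for $i\neq j$, so two distinct abstract points in different components have distinct images, and the line through two distinct points of $\PG(2,\mathbb{L})$ is unique. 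No genuine obstacle is expected; the only thing to be careful about is not to conflate a block of $\mathcal{M}$ (which may be a superline of length $>1$, hence of size $>3$) with a merged block, and to remember that the definition of merged block explicitly excludes elements of $\mathcal{M}$, which is exactly what rules out the order-$1$ subsquare and thereby the trivial case.
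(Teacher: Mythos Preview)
Your proof is correct and follows essentially the same approach as the paper's: define $S_i=\alpha_i^{-1}(B\cap\mathcal{P}_i)$, observe that if two of the $S_i$ are nonempty then the three subsquare containments $S_1\cdot S_2\subseteq S_3$, $S_1\setminus S_3\subseteq S_2$, $S_3/S_2\subseteq S_1$ hold, and conclude. The paper's proof asserts these containments in one line without justification; you have supplied the underlying collinearity-plus-uniqueness argument (and the order~$\ge 2$ step) that the paper leaves implicit.
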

\begin{proof}
Denote by $S_i$ the subsets of $Q$, whose elements correspond to the points in $B\cap \mathcal{P}_i$, $i=1,2,3$. If two of $S_1,S_2,S_3$ are not empty then $S_1\cdot S_2\subseteq S_3$, $S_1\setminus S_3\subseteq S_2$ and $S_3/S_2\subseteq S_1$ hold. This shows that $(S_1,S_2,S_3)$ is a subsquare and  $|S_1|=|S_2|=|S_3|$. 
\end{proof}

\begin{table}[]
\caption{Merged blocks for light multinets of order $6$\label{table:merged}}
\begin{tabular}{|c|c|c|c|c|}
\hline
$\mathcal{M}$ & nr of new long lines & sizes in $\mathcal{P}_1$ & sizes in $\mathcal{P}_2$ & sizes in $\mathcal{P}_3$   \\ \hline
$M_{3}$  & 2        &       &       &             \\ %\hline
$M_{4}$  &          & 3, 3   & 3, 3   & 3, 3      \\ %\hline
$M_{8}$  &          &       &       & 5           \\ %\hline
$M_{9}$  &          &       &       & 3, 3        \\ %\hline
$M_{10}$ &          &       &       & 3, 3, 3, 3  \\ %\hline
$M_{12}$ &          &       &       & 3           \\ %\hline
$M_{13}$ &          &       &       &             \\ %\hline
$M_{14}$ & 1        & 3, 3   & 3, 3   & 3, 3  1   \\ %\hline
$M_{15}$ &          & 3, 3   &       &            \\ %\hline
$M_{16}$ & 1        &       &       & 3, 3        \\ \hline
\end{tabular}
%\begin{tabular}{|c|c|c|c|c|c|}
%\hline
%$\mathcal{M}$ & nr of new long lines & sizes in $\mathcal{P}_1$ & sizes in $\mathcal{P}_2$ & sizes in $\mathcal{P}_3$   & $\dim$ \\ \hline
%$M_{3}$  & 2        &       &       &         & 2   \\ %\hline
%$M_{4}$  &          & 3, 3   & 3, 3   & 3, 3     & 2   \\ %\hline
%$M_{8}$  &          &       &       & 5       & 1   \\ %\hline
%$M_{9}$  &          &       &       & 3, 3     & 1   \\ %\hline
%$M_{10}$ &          &       &       & 3, 3, 3, 3 & 1   \\ %\hline
%$M_{12}$ &          &       &       & 3       & 2   \\ %\hline
%$M_{13}$ &          &       &       &         & 1   \\ %\hline
%$M_{14}$ & 1        & 3, 3   & 3, 3   & 3, 3     & 1   \\ %\hline
%$M_{15}$ &          & 3, 3   &       &         & 1   \\ %\hline
%$M_{16}$ & 1        &       &       & 3, 3     & 1   \\ \hline
%\end{tabular}
\end{table}

\begin{lemma}
Let $\mathcal{M}$ be one of $M_3$, $M_4$, $M_8$, $M_9$, $M_{10}$, $M_{12}$, $M_{13}$, $M_{14}$, $M_{15}$, $M_{16}$ and let $\xi$ be the pre-embedding defined above. Then there is a unique $\xi$-admissible minimal associated prime ideal $P$ of $I_{\mathcal{M},\xi}$. The merged blocks of $P$ are given in Table \ref{table:merged}. 
\end{lemma}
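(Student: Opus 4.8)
The plan is to combine the structural result of the previous lemma with a direct, finite computation modulo the admissibility ideal. By Lemma~\ref{lm:table2} (Table~\ref{table:primdec}), each of the listed $\mathcal{M}$ has exactly one $\xi$-admissible minimal associated prime ideal $P$ of $I_{\mathcal{M},\xi}$, so the ideal $P$ in the statement is well-defined; by the discussion opening this section, the coordinate field is $\mathbb{L}=Q(R/P)$ and a subset $B\subseteq\mathcal{P}$ of size at least $2$ with $|B\cap\mathcal{P}_i|\le 1$ for each collinear triple it fails to be in $\mathcal{L}$ is a merged block of $P$ precisely when $\det(\mathbf{u}_{p_1},\mathbf{u}_{p_2},\mathbf{u}_{p_3})\in P$ for every triple $p_1,p_2,p_3\in B$. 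So the proof reduces to enumerating these determinant conditions.

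First I would fix, for each $\mathcal{M}=M_i$, a reduced Gröbner basis of the unique $\xi$-admissible prime $P$ (produced alongside the primary decomposition in Lemma~\ref{lm:table2}). Then I would run over all $\binom{18}{3}$ triples $\{p_1,p_2,p_3\}$ of points of the pre-embedding $\xi$ of Lemma~\ref{lm:xi_for_M}, reduce each determinant $\det(\mathbf{u}_{p_1},\mathbf{u}_{p_2},\mathbf{u}_{p_3})$ modulo the Gröbner basis of $P$, and record the triple as ``collinear mod $P$'' exactly when the reduction is $0$. Building the graph (or hypergraph) on $\mathcal{P}$ whose edges are the collinear-mod-$P$ triples, the maximal cliques that are not already lines of $\mathcal{M}$ are the merged blocks. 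By the preceding lemma each such clique is either contained in a single component $\mathcal{P}_j$ or is a genuine new long line, i.e.\ a subsquare $(S_1,S_2,S_3)$ with $|S_1|=|S_2|=|S_3|$; sorting the cliques accordingly produces exactly the four columns of Table~\ref{table:merged}. The entries ``nr of new long lines'' and the lists of sizes within each $\mathcal{P}_j$ are then read off directly.

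The only subtlety is bookkeeping rather than mathematics: one must be careful to discard triples that are \emph{already} lines of $\mathcal{M}$ (these are collinear mod $P$ trivially, since $I_{\mathcal{M},\xi}\subseteq P$ is generated exactly by their determinants) before forming the merged blocks, and one must confirm that the cliques found are consistent—that no point is forced to lie on two distinct merged lines through a pair it shares, which is automatic since $\xi$ is a pre-embedding and $P$ is $\xi$-admissible, hence $\bar\sigma$ is a well-defined injective embedding. I would also double-check the ``new long line'' count against the constraint from the previous lemma that a new long line meeting two components in $r$ points each must meet all three in $r$ points; e.g.\ for $M_{14}$ the single new long line has the shape $(3,3,3{+}1)$ listed, which should be read as: the line meets $\mathcal{P}_1,\mathcal{P}_2$ in subsets of size $3$ and $\mathcal{P}_3$ in size $3$, while the separate ``$1$'' records the remaining merged block inside $\mathcal{P}_3$.

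The main obstacle is purely computational: the ideals live in $R=\mathbb{Q}[t_1,\dots,t_{17}]$, so the Gröbner-basis reductions can be heavy, and for the higher-dimensional admissible primes (dimension $2$ for $M_3,M_4,M_{12}$) one must make sure the reduction test $\det\in P$ is carried out exactly, not numerically, to avoid spurious or missed coincidences. Once the exact Gröbner data from Lemma~\ref{lm:table2} is in hand, however, every step is a finite check, and the output is Table~\ref{table:merged}. The SageMath code performing this enumeration is given in the appendix.
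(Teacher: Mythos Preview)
Your approach is correct and is exactly what the paper does: uniqueness of $P$ is quoted from Table~\ref{table:primdec}, and the merged blocks are obtained by reducing every $3\times 3$ determinant $\det(\mathbf{u}_{p_1},\mathbf{u}_{p_2},\mathbf{u}_{p_3})$ modulo a Gr\"obner basis of $P$ and then aggregating the collinear-mod-$P$ triples into maximal sets, precisely as the appendix function \texttt{merged\_blocks\_of\_embedding} implements. The one slip is your reading of the $M_{14}$ row: the three ``sizes in $\mathcal{P}_i$'' columns list the merged blocks lying \emph{entirely inside} the respective component, independently of the ``nr of new long lines'' column; they do not record how the new long line meets the components.
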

\begin{proof}
The uniqeness of $P$ follows from Table \ref{table:primdec}. For given $P$, the data of Table \ref{table:merged} can be computed efficienty with the Singular \cite{Singular} package \cite{Sing_primdec}. 
\end{proof}

\begin{proposition}
Let $\mathcal{M}$ be an abstract light dual multinet of order $6$ with a unique superline of length $2$. Assume that $\mathcal{M}$ has a weak projective $$\mathbb{Q}$$-embedding $\beta$. Then $\beta$ has at least the merged blocks given in Table \ref{table:merged}. 
\end{proposition}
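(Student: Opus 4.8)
The plan is to reduce the statement to the analysis already carried out for the generic pre-embedding $\xi$ of Lemma~\ref{lm:xi_for_M} and the unique $\xi$-admissible minimal associated prime $P$ of $I_{\mathcal{M},\xi}$. First I would invoke the pre-embedding property of $\xi$: given any weak projective $\mathbb{Q}$-embedding $\beta$ of $\mathcal{M}$, there is a ring homomorphism $\sigma:R\to\mathbb{L}$ (with $\mathbb{L}$ the coordinate field of $\beta$) such that $\bar\sigma$ is a weak projective $\mathbb{Q}$-embedding projectively equivalent to $\beta$. By Lemma~\ref{lm:xiadm} the kernel $\ker\sigma$ is a $\xi$-admissible prime ideal containing $I_{\mathcal{M},\xi}$, hence it contains one of the minimal associated primes $P_k$; since $\ker\sigma$ is $\xi$-admissible, so is $P_k$, and by Table~\ref{table:primdec} (Lemma~\ref{lm:table2}) the only $\xi$-admissible minimal associated prime is the one listed, so $P\subseteq\ker\sigma$.

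The second step is to transfer merged blocks from $P$ to $\beta$. If $B\subseteq\mathcal{P}$ is a merged block of $P$, then $\det(\mathbf{u}_{p_1},\mathbf{u}_{p_2},\mathbf{u}_{p_3})\in P\subseteq\ker\sigma$ for every triple of points $p_1,p_2,p_3\in B$, so the images $\bar\sigma(p_1),\bar\sigma(p_2),\bar\sigma(p_3)$ are collinear in $\PG(2,\mathbb{L})$; consequently all of $\bar\sigma(B)$ lies on a single line of $\PG(2,\mathbb{L})$ (using that $\bar\sigma$ is injective, so $|\bar\sigma(B)|=|B|\geq 2$ and any two of its points determine a unique line). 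Since $B\notin\mathcal{M}$ while $B$ is contained in a line of the projective plane, $B$ is a merged block of $\bar\sigma$, hence of $\beta$ up to the projective equivalence. Thus every merged block recorded in Table~\ref{table:merged} for $P$ is also a merged block of $\beta$, which is exactly the assertion ``$\beta$ has \emph{at least} the merged blocks given in Table~\ref{table:merged}''.

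The one point that needs a little care — and what I expect to be the main (though minor) obstacle — is the passage from ``$P\subseteq\ker\sigma$'' to ``merged blocks of $P$ are merged blocks of $\bar\sigma$'': the containment can be strict, so $\bar\sigma$ may have \emph{additional} merged blocks not visible from $P$ (indeed different choices of $\sigma$, i.e.\ different special points of the variety $\mathbf{v}(P)$, can collapse further triples). This is why the statement only claims a lower bound on the merged blocks rather than an exact description; the generic prime $P$ captures precisely the merged blocks forced for \emph{all} embeddings, and the proof must be phrased to claim no more than that. I would also remark that the merged blocks of $P$ in each case have the structure described by the preceding lemma — each is either inside a single component $\mathcal{P}_i$ or is a new long line — so Table~\ref{table:merged} is internally consistent with that classification. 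No further computation is needed beyond what produced Table~\ref{table:merged}.
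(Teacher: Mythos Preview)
Your proposal is correct and follows essentially the same approach as the paper's own proof: use the pre-embedding $\xi$ and Lemma~\ref{lm:xiadm} to obtain $\sigma$ with $\bar\sigma$ projectively equivalent to $\beta$, note that $\ker\sigma$ contains some minimal associated prime which must be $\xi$-admissible and hence equals the unique such $P$ from Table~\ref{table:primdec}, and then push merged blocks of $P$ through $P\subseteq\ker\sigma$. In fact you spell out the step ``uniqueness of $P$ implies $P\leq\ker\sigma$'' more carefully than the paper does, and your remark on why only a lower bound is claimed is a useful clarification.
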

\begin{proof}
Let $P$ be the unique minimal associated prime ideal of $I_{\mathcal{M},\xi}$. Let $\beta:\mathcal{P}\to \PG(2,\mathbb{L})$ be a weak projective $\mathbb{Q}$-embedding of $\mathcal{M}$. By Lemmas \ref{lm:xiadm} and \ref{lm:xi_for_M}, there is a ring homomorphism $\sigma:R\to \mathbb{L}$ such that $\beta=\bar{\sigma}$. The uniqueness of $P$ implies $P\leq \ker(\sigma)$. Therefore, the merged blocks of $P$ are merged blocks of $\beta$. 
\end{proof}

\appendix

\section{SageMath code for well-indexing of finite abstract light dual multinets}

Let $\Sigma=(\mathcal{P},\mathcal{B})$ be an abstract light dual multinet. This function picks a block $B\in \mathcal{B}$ of maximal length and returns an isomorphic abstract light dual multinet $\Sigma'$ which is well-indexed w.r.t. $B$. 

\begin{lstlisting}[language=python,basicstyle=\footnotesize]
def blockdesign_wellindexing(des):
    n=int(len(des.ground_set())/3)
    r=max(des.block_sizes())/3
    dn=des.blocks()
    sl=[d for d in dn if len(d)==3*r].pop()
    a=[0]*3*n
    a[0:r]=tuple(sl[0:r])
    a[n:n+r]=tuple(sl[r:2*r])
    a[2*n:2*n+r]=tuple(sl[2*r:3*r])
    a[n+r:2*n]=[x for x in range(n,2*n) if not(x in sl)]
    a[2*n+r:]=[[d[2] for d in dn if (d[0],d[1])==(a[0],a[n+j])].pop()
        for j in range(r,n)]
    a[r:n]=[[d[0] for d in dn if (d[1],d[2])==(a[n],a[2*n+j])].pop() 
        for j in range(r,n)]
    aa=[a.index(i) for i in range(3*n)]
    dnn=[tuple([aa[x] for x in d]) for d in dn]
    dnn.sort()
    return BlockDesign(3*n,dnn)
\end{lstlisting}

\section{SageMath code for the classification of abstract light dual multinets of order $6$}

\begin{lstlisting}
cts=[
    ([[1,2,3,4,5,6],[2,3,4,5,6,1],[3,4,5,6,1,2],
        [4,5,6,1,2,3],[5,6,1,2,3,4],[6,1,2,3,4,5]],"#6.1.1.1"),
    ([[1,2,3,4,5,6],[2,1,5,6,3,4],[3,6,1,5,4,2],
        [4,5,6,1,2,3],[5,4,2,3,6,1],[6,3,4,2,1,5]],"#6.2.1.1"),
    ([[1,2,3,4,5,6],[2,3,1,5,6,4],[3,1,2,6,4,5],
        [4,6,5,2,1,3],[5,4,6,3,2,1],[6,5,4,1,3,2]],"#6.3.1.1"),
    ([[1,2,3,4,5,6],[2,1,4,3,6,5],[3,4,5,6,1,2],
        [4,3,6,5,2,1],[5,6,1,2,4,3],[6,5,2,1,3,4]],"#6.4.1.1"),
    ([[1,2,3,4,5,6],[2,1,5,6,3,4],[3,6,2,5,4,1],
        [4,5,6,2,1,3],[5,4,1,3,6,2],[6,3,4,1,2,5]],"#6.5.1.1"),
    ([[1,2,3,4,5,6],[2,1,4,5,6,3],[3,6,2,1,4,5],
        [4,5,6,2,3,1],[5,3,1,6,2,4],[6,4,5,3,1,2]],"#6.6.1.1"),
    ([[1,2,3,4,5,6],[2,1,4,3,6,5],[3,5,1,6,4,2],
        [4,6,5,1,2,3],[5,3,6,2,1,4],[6,4,2,5,3,1]],"#6.7.1.1"),
    ([[1,2,3,4,5,6],[2,1,6,5,3,4],[3,6,1,2,4,5],
        [4,5,2,1,6,3],[5,3,4,6,1,2],[6,4,5,3,2,1]],"#6.8.1.1"),
    ([[1,2,3,4,5,6],[2,3,1,6,4,5],[3,1,2,5,6,4],
        [4,6,5,1,2,3],[5,4,6,2,3,1],[6,5,4,3,1,2]],"#6.9.1.1"),
    ([[1,2,3,4,5,6],[2,1,6,5,4,3],[3,5,1,2,6,4],
        [4,6,2,1,3,5],[5,3,4,6,2,1],[6,4,5,3,1,2]],"#6.10.1.1"),
    ([[1,2,3,4,5,6],[2,1,4,5,6,3],[3,4,2,6,1,5],
        [4,5,6,2,3,1],[5,6,1,3,2,4],[6,3,5,1,4,2]],"#6.11.1.1"),
    ([[1,2,3,4,5,6],[2,1,5,6,4,3],[3,5,4,2,6,1],
        [4,6,2,3,1,5],[5,4,6,1,3,2],[6,3,1,5,2,4]],"#6.12.1.1")
]

def table_to_dualnet(t):
    n=len(t)
    ret=[]
    for y in range(n): 
        ret.extend([(x,y+n,t[x][y]-1+2*n) for x in range(n)])
    return ret

def generated_subsquare(dn,gens):
    while True:
        bls=[]
        for a in Combinations(gens,2):
            a=set(a)
            bls.extend([d for d in dn if len(a.intersection(d))>1])
        pts=set()
        for a in bls:
            pts=pts.union(set(a))
        if len(pts)>len(gens):
            gens=pts
        else:
            gens=list(gens)
            gens.sort()
            return tuple(gens),bls

def all_proper_subsquares(dn):
    n=sqrt(len(dn))
    ret=[]
    for x in Combinations(range(n),2):
        for y in range(n,2*n):
            new=generated_subsquare(dn,{x[0],x[1],y})[0]
            if (len(new)<3*n) and not(new in ret): ret.append(new)
    ret.sort()
    return ret

def superline_to_blocks(dn,sl):
    n=sqrt(len(dn))
    ret=[d for d in dn if not((d[0] in sl) and (d[1] in sl))]
    ret.append(tuple(sl))
    return ret #BlockDesign(3*n,ret)

##############

all_abstract_ldms=[]
for ct in cts:
    dn=table_to_dualnet(ct[0])
    sqs=all_proper_subsquares(dn)
    for sl in sqs:
        bls=superline_to_blocks(dn,sl)
        des=BlockDesign(18,bls)
        des.qgname=ct[1]
        all_abstract_ldms.append(des)

len(all_abstract_ldms)

classes=[]
inds=range(len(all_abstract_ldms))
while len(inds)>0:
    new_class=[i for i in inds if 
        all_abstract_ldms[inds[0]].is_isomorphic(all_abstract_ldms[i])]
    classes.append(new_class)
    inds=[i for i in inds if not(i in new_class)]

len(classes)

ldms_ids=[
    ("#6.9.1.1", "#6.1.1.1", "(((C3 x C3 x C3) : C3) : C2) : C2"),
    ("#6.9.1.1", "#6.3.1.1", "#6.2.1.1", 
        "(((C3 x C3 x C3) : C3) : C2) : C2"),
    ("#6.4.1.1", "#6.1.1.1", "C2 x S4"),
    ("#6.5.1.1", "#6.2.1.1", "C2 x S4"),
    ("#6.4.1.1", "C2 x S4"),
    ("#6.7.1.1", "C2 x S4"),
    ("#6.5.1.1", "C2 x D4"),
    ("#6.11.1.1", "#6.8.1.1", "C2 x D4"),
    ("#6.10.1.1", "C2 x D4"),
    ("#6.11.1.1", "C2 x D4"),
    ("#6.9.1.1", "#6.12.1.1", "C2 x C2 x C2"),
    ("#6.12.1.1", "C2 x C2 x C2"),
    ("#6.7.1.1", "#6.6.1.1", "S3"),
    ("#6.5.1.1", "#6.10.1.1", "C2 x C2"),
    ("#6.7.1.1", "#6.10.1.1", "C2 x C2"),
    ("#6.11.1.1", "#6.12.1.1", "C2 x C2"),
]
abstract_ldms=dict()
for cl in classes:
    names=set(all_abstract_ldms[i].qgname for i in cl)
    des=blockdesign_wellindexing(all_abstract_ldms[cl[0]])
    des.qgrs=names
    #abstract_ldms.append(des)
    stdescr=des.automorphism_group().structure_description()
    pos=list(names)
    pos.append(stdescr)
    print pos
    pos=ldms_ids.index(tuple(pos))
    abstract_ldms[pos]=des
\end{lstlisting}

\section{SageMath code for the weak embeddings of abstract light dual multinets of order $6$}

\begin{lstlisting}
P=PolynomialRing(QQ,'t',17,order='lex')
t=P.gens()

def ideal_of_preembedding(xi,des):
    eqs=[]
    for bl in des.blocks():
        for t in Combinations(bl,3):
            eqs.append(det(Matrix([xi[i] for i in t])))
    eqs=[x for x in eqs if not x.is_zero()]
    I=P.ideal(eqs)
    return I

def pairwise_crossproducts(pts):
    ret=[]
    for a in Combinations(pts,2):
        u=vector(a[0]),vector(a[1])
        ret.append(u[0].cross_product(u[1]))
    return ret

def not_all_zero_modI(pts,I):
    for p in pts:
        if ([I.reduce(x*P.one()).is_zero() for x in p]==[True]*3):
            return False
    return True

def merged_blocks_of_embedding(xi,pi):
    bls=[]
    for bl in Combinations(range(len(xi)),3):
        d=det(Matrix([xi[i] for i in bl]))
        d=pi.reduce(d*P.one())
        if d.is_zero():
            bls.append(set(bl))
    bl=0
    while bl<len(bls):
        d=[i for i in range(bl+1,len(bls)) 
            if len(bls[bl].intersection(bls[i]))>1]
        if d==[]:
            bl=bl+1
        else:
            d.reverse()
            for i in d:
                bls[bl]=bls[bl].union(bls[i])
                bls.remove(bls[i])
    bls=[tuple(sorted(bl)) for bl in bls] 
    return bls

xi=[
    [    1,    0,0],[    1, t[0],0],[ t[3], t[4],1],
    [ t[5], t[6],1],[ t[7], t[8],1],[ t[9],t[10],1],
    [    0,    1,0],[    1, t[1],0],[    1,    0,1],
    [t[11],t[12],1],[t[13],t[14],1],[t[15],t[16],1],
    [    1,    1,0],[    1, t[2],0],[    0,    0,1],
    [ t[5],t[12],1],[ t[7],t[14],1],[ t[9],t[16],1]
]
xi_crossprods=pairwise_crossproducts(xi)

##############

pds=[]
for i in [2..15]:
    I=ideal_of_preembedding(xi,abstract_ldms[i])
    minprimes=I.minimal_associated_primes()
    dd=[]
    for pi in minprimes:
        dd.append(not_all_zero_modI(xi_crossprods,pi))
        if dd[-1]: pds.append([i,pi])
    print "# ",i,"\t",dd

[x[1].dimension() for x in pds]

for a in pds:
    a.append(merged_blocks_of_embedding(xi,a[1]))

for a in pds:
    print "# ",a[0],"\tdim=",a[1].dimension(),"\t",
    print [len([b for b in a[2] if ((6*i<=b[0]) and (b[-1]<6*(i+1)))]) 
        for i in range(3)],"\t",
    bb=[len(x) for x in a[2]]
    print [[i,bb.count(i)] for i in set(bb)]
\end{lstlisting}

\end{document}